\date{\today}
\title[Bi-Lipschitz geometry of contact orbits]{Bi-Lipschitz geometry of contact orbits in the boundary of the nice dimensions}
\author{Maria Aparecida Soares Ruas \& Saurabh Trivedi}
\begin{document}

\newtheorem{thm}{Theorem}[section]
\newtheorem{lem}[thm]{Lemma}
\theoremstyle{theorem}
\newtheorem{prop}[thm]{Proposition}
\newtheorem{defn}[thm]{Definition}
\newtheorem{cor}[thm]{Corollary}
\newtheorem{example}[thm]{Example}
\newtheorem{xca}[thm]{Exercise}

\newcommand{\bb}{\mathbb}
\newcommand{\al}{\mathcal}
\newcommand{\ak}{\mathfrak}
\newcommand{\fs}{\mathscr}

\theoremstyle{remark}
\newtheorem{rem}[thm]{Remark}
\parskip .12cm

\begin{abstract} 
Mather proved that the smooth stability of smooth maps between manifolds is a generic condition if and only if the pair of dimensions of the manifolds are 'nice dimensions' while topologically stability is a generic condition in any pair of dimensions. And, by a result of du Plessis and Wall $C^1$-stability is also a generic condition precisely in the nice dimensions. We address the question of bi-Lipschitz stability in this article. We prove that the Thom-Mather stratification is bi-Lipschitz contact invariant in the boundary of the nice dimensions. This is done in two steps: first we explicitly write the contact unimodular strata in every pair of dimensions lying in the boundary of the nice dimensions and second we construct bi-Lipschitz vector fields whose flow provide the bi-Lipschitz contact trivialization in each of the cases.
\end{abstract}

\maketitle

\section{Introduction}

We know from Mather \cite{MatherV} that given a pair of positive integers $(n,p)$, there exists a smallest Zariski closed $\fs K^k$-invariant set $\pi^k(n,p)$ in the set $J^k(n,p)$ of $k$-jets of smooth mappings from $(\bb R^n,0)$ to $(\bb R^p,0)$ such that the complement of $\pi^k(n,p)$ in $J^k(n,p)$ is the union of only finitely many $\fs K^k$-orbits. 

The set $\pi^k(n,p)$, let's say the bad set, is in fact the set of $k$-jets in $J^k(n,p)$ of `modality' greater than or equal to $1$. The codimension of $\pi^k(n,p)$ in $J^k(n,p)$ decreases as $k$ increases. We know that there exists a $k$ big enough for which the codimension of the bad set attains its minimum. For this $k$ the codimension of the bad set $\pi(n,p)$ is denoted by $\sigma(n,p)$. Mather \cite{MatherVI} calculated the number $\sigma(n,p)$ and the results he obtained are as follows:
\begin{align*}
\intertext{\bf Case 1: $n \leq p$}
\sigma(n,p) &= \begin{cases}
6(p-n) + 8, & \text{if } p - n \geq 4 \text{ and } n \geq 4\\
6(p-n) + 9, & \text{if } 3 \geq p-n \geq 0 \text{ and } n \geq 4 \text{ or if } n =3\\
7(p-n)+10, & \text{if } n =2\\
\infty, & \text{if } n=1  
\end{cases}
\intertext{\bf Case 2: $n > p$}
\sigma(n,p) &= 
\begin{cases}
9, & \text{if } n = p+1\\
8, & \text{if } n = p+2\\
n-p+7,& \text{if } n \geq p+3
\end{cases}
\end{align*}

Suppose $k$ has the property that the bad set has codimension $\sigma(n,p)$. The complement of the bad set in $J^k(n,p)$ is also a $\fs K^k$-invariant set and the orbits lying in this set are called the $\fs K$-simple orbits. More is known about the complement of the bad set $\pi(n,p)$ in $J^k(n,p)$. It is not only the union of finitely many $\fs K^k$-orbits but also these $\fs K^k$-orbits are disjoint submanifolds of $J^k(n,p)$. Thus, these finitely many $\fs K^k$-orbits stratify the complement of the bad set. In fact, the stratification thus obtained is smoothly trivial. 

Mather \cite{MatherV} proved that the number $\sigma(n,p)$ is greater than $n$ if and only if the set of proper smoothly stable mappings between any smooth manifolds $N$ and $P$ of dimensions $n$ and $p$ respectively is dense in the space $C^\infty_{pr}(N,P)$ of all proper smooth mappings between $N$ and $P$ with Whitney strong topology. The pairs of integers $(n,p)$ for which $\sigma(n,p) > n$ are called the nice dimensions. And, we say that the pairs of integers $(n,p)$ are in the boundary of nice dimensions if $\sigma(n,p) = n$. Later, Mather \cite{Mather, Mather7} proved that the proper topologically stable maps between $N$ and $P$ are dense in the set of proper smooth maps with the Whitney strong topology without any restrictions on the dimensions on $N$ and $P$. This result is called the topological stability theorem and is perhaps one of the deepest results in singularity theory. A detailed exposition and much more about the stratifications of jet spaces can be found in the book of du Plessis and Wall, \cite{duplessis3}.

Much later du Plessis and Wall \cite{duplessis2} proved that outside the nice dimensions ($\sigma(n,p) \leq n$) the set of proper smooth mappings between any manifolds $N$ and $P$ of dimensions $n$ and $p$ respectively contains a strongly open set of non $C^1$-stable maps. Since proper smoothly stable maps in the nice dimensions are dense, this implies that $\sigma(n,p) > n$ is a necessary and sufficient condition for proper $C^1$-stable maps to be dense. 

It is natural to ask whether the set of proper Lipschitz stable maps are dense in the set of all maps between two smooth manifolds $N$ and $P$ with the Whitney strong topology outside the nice dimensions. We conjecture that in the boundary of the nice dimensions such a result is true. In this article we propose a line of attack to this conjecture. Namely, we describe explicitly the Thom-Mather stratification in the boundary of the nice dimensions. The strata of the stratification are the $\fs K^k$-orbits of smooth simple jets with a distinguished stratum made of jets of codimension equal to the dimension of the source that have smooth $\fs K$-modality $1$. We describe this distinguished stratum in every pair $(n,p)$ in the boundary of the nice dimension and show that it is bi-Lipschitz $\fs K^k$-invariant. Since the simple $\fs K^k$-orbits are already bi-Lipschitz invariant, this shows that every stratum and so the Thom-Mather stratification is bi-Lipschitz $\fs K^k$-invariant.  

\section{Thom-Mather stratification in the boundary of the nice dimensions}

In the proof of the topological stability theorem Mather \cite{Mather, Mather7} showed that for any pair of integers $(n,p)$ and for a large $k$ there exists a set $S \subset J^k(n,p)$ with the following properties:

1. $S$ has codimension greater than $n$.

2. There is a stratification of $J^k(n,p) - S$. 

\noindent
The stratification of $J^k(n,p) - S$ is called the Thom-Mather stratification. It is then shown that any proper map whose $k$-jet extension is transverse to the Whitney stratification of $J^k(n,p) - S$ and avoids $S$ is topologically stable. The topological stability theorem follows from the fact that such proper transverse maps are dense in the set of all proper maps. This last fact is proved using Thom's second isotopy lemma.

In the nice dimensions this set $S$ can be taken to be the bad set $\pi(n,p)$ and its complement can be stratified by $\fs K^k$-orbits. Outside the nice dimensions it is difficult to say much about the set $S$ due to the nature of its construction but in the boundary of the nice dimensions we do have an explicit description of the set $S$ and the strata of the complement of $S$ in $J^k(n,p)$. To describe $S$ we first recall the $\fs K$-codimension of a germ $f : (\bb R^n,0) \to (\bb R^p,0)$. The notations are by now standard in Singularity theory. Denote by $\al E_n$ the local ring of germs of smooth functions on $\bb R^n$ and $\ak m_n$ its maximal ideal. Denote by $\theta(f)$ the set of germs of vector fields along $f$, it can be identified with a module of rank $p$ over $\al E_n$. Denote by $\theta(n)$ the set of vector fields along the identity map germ $id : (\bb R^n,0) \to (\bb R^n,0)$. 

Then the $\fs K$-tangent space of $f$ is defined to be the module $T{\fs K}(f) = tf(\ak m_n\theta(n)) + f^*(\ak m_p)\theta(f)$, where $tf(\ak m_n \theta(f))$ is the Jacobian module of $f$ and $f^*(\ak m_p)$ is the ideal of $\al E_n$ generated by the components of $f$. Then, the $\fs K$-codimension of $f$ is defined to be the $\bb R$-vector space dimension of $\theta(f)/T\fs K(f)$. Given a germ $f : (\bb R^n,0) \to (\bb R^p,0)$, the $\fs K$-codimension is equal to the $\fs K^k$-codimension of the $k$-jet of $f$ under the canonical action of contact group on $J^k(n,p)$ for $k$ large enough. 

We also need the notion of modularity\footnote{Modularity can be defined for any group $\fs R, \fs L, \fs A, \fs C, \fs K$, in this article by modularity we mean for group $\fs K$.}. Let $z \in J^k(n,p)$, denote by $\fs K^*(z)$ the union of all $\fs K^k$-orbits of codimension equal to the codimension of $z$ in $J^k(n,p)$. The set $\fs K^*(z)$ in $J^k(n,p)$ is a Zariski closed subset. Suppose $\fs K_*(z)$ is the connected component of $\fs K^*(z)$ in which $z$ lies. Then, we say that $z \in J^k(n,p)$ is \emph{$r$-modular} if:
$$cod \fs K_*(z) = cod \fs K^k.z - r.$$
We can say that $1$-modular jets are unimodular, $2$-modular jets are bimodular and so on. Also, if the union of unimodular jets is a submanifold of $J^k(n,p)$, which it will be in our case, we call this union a unimodular stratum. 

We know that the bad set $\pi(n,p)$ consists of the jets of modularity greater than or equal to 1. We take the unimodular strata from the bad set and add them to its complement in $J^k(n,p)$. Since in the boundary of the nice dimensions these are only strata of codimension $n$, the rest of the bad set, let's say $\tilde{\pi}(n,k)$, will have codimension greater than $n$ and in this case $S$ can be taken to be precisely this set $\tilde{\pi}(n,k)$. We stratify $J^k(n,p) - \tilde{\pi}(n,k)$ by taking strata as the $\fs K^k$-orbits of the stable maps and the unimodular strata. Let us call this stratification $\Sigma_{bnd}$ ($bnd$ for boundary of the nice dimensions).

In the global setting we have the following situation. Let $N$ and $P$ be two smooth manifolds of dimensions $n$ and $p$ respectively and denote by $J^k(N,P)$ the $k$-jet bundle over $N \times P$ (with fibers $J^k(n,p)$). Denote by $\tilde{\pi}(N,P)$ the subbundle of $J^k(N,P)$ with fibers $\tilde{\pi}(n,p)$. Then, clearly the codimension of $J^k(N,P) \setminus \tilde{\pi}(N,P)$ in $J^k(N,P)$ is equal to the codimension of $\tilde{\pi}(n,p)$ in $J^k(n,p)$. Moreover the stratification $\Sigma_{bnd}$ induces a stratification on $J^k(N,P) \setminus \tilde{\pi}(N,P)$, we call this $\Sigma_{bnd}(N,P)$. 

\section{Whitney $(a)$-regularity of $\Sigma_{bnd}$}

It is known that the Thom-Mather stratification is Whitney $(b)$-regular. The proof of this is very complicated and occupies bulk of the last chapter in Gibson et al. \cite{GWPL}. We give an easy proof that the constructed stratification $\Sigma_{bnd}$ of $J^k(n,p) - \tilde{\pi}(n,p)$ in the boundary of the nice dimensions is Whitney $(a)$-regular\footnote{Although the result is weaker than the known result (Whitney $(b)$-regularity), the idea of the proof can have other applications.}. For this we invoke a theorem of Trotman \cite{Trotman} which characterizes Whitney $(a)$-regular stratifications as those for which the maps transverse to the stratification forms an open set in the Whitney strong topology. 

Let $N$ and $P$ be smooth manifolds of dimension $n$ and $p$ respectively such that $(n,p)$ lies in the boundary of nice dimensions. Let $N$ be a compact manifold. Denote by $J^k(N,P)$, for $k$ large enough, the $k$-jet bundle of smooth maps between $N$ and $P$ and by $\tilde{\Pi}(N,P)$ the subbundle of $J^k(N,P)$ with fiber $\tilde{\pi}(n,p)$. Denote by $\Sigma_{bnd}(N,P)$ the stratification of the subset of $J^k(N,P)$ induced by $\Sigma_{bnd} \subset J^k(n,p)$. We will show that:

\begin{thm}\label{Bndopen}
The set of maps $f : N \to P$ such that $j^kf(N) \cap \tilde{\Pi}(n,p) = \emptyset$ and is transverse to the strata of $\Sigma_{bnd}(N,P)$ is open in the strong topology on the set of smooth maps between $N$ and $P$.
\end{thm}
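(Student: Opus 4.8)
The plan is to split the openness statement into two parts which together give the result. First, the condition ``$j^kf(N) \cap \tilde{\Pi}(N,P) = \emptyset$'' is an open condition because $N$ is compact and $\tilde{\Pi}(N,P)$ is a closed subbundle (its fibers $\tilde{\pi}(n,p)$ are Zariski closed in $J^k(n,p)$): if $j^kf$ misses the closed set $\tilde{\Pi}(N,P)$, then by compactness there is a positive distance between the compact set $j^kf(N)$ and $\tilde{\Pi}(N,P)$, and any $g$ strong-close to $f$ has $j^kg$ uniformly close to $j^kf$, hence still disjoint from $\tilde{\Pi}(N,P)$. Second, and this is the substantive part, I would show that transversality to the strata of $\Sigma_{bnd}(N,P)$ is an open condition on the complement of $\tilde{\Pi}(N,P)$, and for this I invoke Trotman's theorem \cite{Trotman}: a stratification is Whitney $(a)$-regular if and only if the set of maps transverse to it is open in the Whitney strong topology. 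Thus it suffices to prove that $\Sigma_{bnd}(N,P)$ restricted to $J^k(N,P) \setminus \tilde{\Pi}(N,P)$ is Whitney $(a)$-regular.

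To establish Whitney $(a)$-regularity of $\Sigma_{bnd}$, I would argue as follows. The strata of $\Sigma_{bnd}$ are of two types: the $\fs K^k$-orbits of stable jets, and the unimodular strata. Between two stable orbits, or between a stable orbit and a unimodular stratum, $(a)$-regularity is inherited from the known Whitney $(b)$-regularity of the full Thom-Mather stratification (since $(b)$ implies $(a)$, and removing $\tilde{\pi}(n,p)$ only drops strata). The new point is pairs where at least one stratum is a unimodular stratum, in particular where both the larger and smaller stratum are among the newly-adjoined unimodular strata, or where a unimodular stratum lies in the closure of a stable orbit. Here I would use the explicit description of the unimodular strata (the main content of the later sections of the paper): in the boundary of the nice dimensions these strata are given by concrete normal forms depending on a modal parameter, and one can write down the tangent spaces to the strata explicitly. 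Because each unimodular stratum is a union of $\fs K^k$-orbits fibering over the modal parameter and is itself $\fs K^k$-invariant, the tangent space at a point contains the full $\fs K^k$-orbit tangent space plus the modal direction; Whitney $(a)$ along sequences approaching a smaller stratum then follows from the semialgebraic (indeed algebraic) nature of the strata together with the uniformity provided by the group action — limits of tangent planes along an orbit are controlled by the group, and the extra modal direction varies continuously.

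The main obstacle I expect is the $(a)$-regularity verification at the ``interface'' pairs involving the unimodular strata — precisely the pairs of strata that are \emph{not} already covered by the classical Whitney $(b)$-regularity of the Thom-Mather stratification. One has to check that as a sequence of points in a higher stratum (say a stable orbit, or a higher-codimension unimodular stratum) converges to a point $z$ of a unimodular stratum, the limiting tangent planes contain $T_z$ of the unimodular stratum. The delicate feature is the modal parameter: one must ensure that the ``modal direction'' at $z$ is captured in the limit, which is where the explicit normal forms in the boundary cases are essential and where a uniform (semialgebraic) estimate is needed rather than a pointwise one. An alternative, possibly cleaner route that I would also consider is to bypass $(a)$-regularity entirely and prove openness of the transversality condition directly: on the compact manifold $N$, transversality of $j^kf$ to a \emph{finite} algebraic stratification away from a closed ``bad'' set is stable under small strong perturbations provided the stratification has locally bounded geometry, and the explicit normal forms give exactly such bounds. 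But since the paper explicitly states it wants to invoke Trotman's characterization, I would present the $(a)$-regularity argument as the main line and treat the explicit tangent-space computation for the unimodular strata (carried out using the normal forms established later in the paper) as the technical heart of the proof.
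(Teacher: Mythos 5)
Your plan reverses the logical direction of the paper's argument and would make the subsequent Corollary circular. In the paper, Theorem~\ref{Bndopen} is proved \emph{first}, by a direct openness argument that does not use any regularity property of $\Sigma_{bnd}$; then Trotman's theorem is invoked in the direction ``openness of transversality implies Whitney $(a)$'' to obtain the Corollary that $\Sigma_{bnd}$ is $(a)$-regular. This is the whole point of the section: the paper advertises an ``easy proof'' of $(a)$-regularity that avoids the very long $(b)$-regularity argument in Gibson--Wirthm\"uller--du~Plessis--Looijenga. Your proposal instead proves Theorem~\ref{Bndopen} by \emph{assuming} $(a)$-regularity of $\Sigma_{bnd}$ and then applying Trotman's theorem in the other direction. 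If one then tried to prove the paper's Corollary from your version of the theorem, the argument would be circular.

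The $(a)$-regularity sketch you offer to fill this in has its own problems. You propose to get $(a)$ for pairs of strata involving stable orbits ``from the known Whitney $(b)$-regularity of the full Thom-Mather stratification.'' But the Thom-Mather stratification already includes the unimodular strata, so its $(b)$-regularity, if invoked, would cover every pair of strata in $\Sigma_{bnd}$ at once — there would be nothing left to check, and your ``technical heart'' about the modal direction would be redundant. Conversely, if you refuse to invoke the GWPL theorem (as the paper does, precisely because it is the hard result one is trying to avoid), your sketch for the unimodular pairs is far from a proof: the claim that ``limits of tangent planes along an orbit are controlled by the group, and the extra modal direction varies continuously'' is exactly the kind of pointwise assertion you yourself flag as insufficient, and no uniform semialgebraic estimate is actually produced.

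The paper's actual proof is quite different and substantially more elementary in spirit. After establishing that avoiding the closed bad set $\tilde{\Pi}(N,P)$ is an open $C^0$-condition (this part of your plan matches), the paper exploits the codimension count $\operatorname{cod} W = n$ together with compactness of $N$: Lemma~\ref{transverse} gives a neighbourhood of $f$ on which the (finite) preimage $(j^kg)^{-1}(W)$ of the unimodular stratum persists with the same cardinality and remains transverse. Away from the finite set $B = f((j^kf)^{-1}(W))$, the restriction of $f$ is proper and infinitesimally stable, hence stable, so transversality to the remaining (simple) strata is controlled by a GWPL-type lemma (the analogue of their Lemma~4.3, page~147) applied over a locally finite cover of $P - B$ by relatively compact open sets. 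The open neighbourhood of $f$ is then the intersection of finitely-or-locally-finitely many such open conditions. This route to openness is direct and does not pass through any stratification-regularity statement at all — which is precisely what lets the Corollary \emph{deduce} $(a)$-regularity afterwards. You briefly mention a ``cleaner route'' of proving openness directly; that is the route you should take, and you should drop the detour through $(a)$-regularity entirely.
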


For this we need the following lemmas:

\begin{lem}
Let $X$ and $Y$ be smooth manifolds, $W \subset Y$ a submanifold and $f : X \to Y$. Let $p \in X$ and $f(p) \in W$. If there is a neighbourhood $U$ of $f(p)$ in $Y$ and a submersion $\phi : U \to \bb R^k$ ($k$ is the codimension of $W$) such that $W \cap U = \phi^{-1}(0)$ then $f$ is transverse to $W$ at $p$ if and only if $\phi \circ f$ is a submersion at $p$.
\end{lem}

And, if codimension of $W$ is equal to the dimension of $X$, one has:

\begin{lem} \label{transverse} (A). Let $X$ and $Y$ be smooth manifolds with $W$ a submanifold of $Y$. Assume that the dimension of $X$ is equal to the codimension of $W$. Let $p$ be in $X$ and let $f : X \to Y$ be smooth. Assume that $f(p)$ is in $W$ and $f$ is transverse to $W$ at $p$. Then, there exist a neighbourhood $\al N$ of $f \in C^{\infty}(X,Y)$ and an open neighbourhood $U$ of $p$ in $X$ such that for all $g$ in $\al N$, $g^{-1}(W) \cap U$ consists of one point $q$ and $g$ is transverse to $W$ at $q$.
	
(B). Assume $X$ is compact. Let $f : X \to Y$ is transverse to $W$. Then, there is an open neighbourhood $\al N$ of $f$ in $C^{\infty}(X,Y)$ such that the number of points in $f^{-1}(W)$ is equal to the number of points in $g^{-1}(W)$ for any $g$ in $\al N$.
\end{lem}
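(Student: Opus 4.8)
The plan is to prove Lemma~\ref{transverse} by reducing both parts to the previous lemma, which reformulates transversality to $W$ at a point in terms of a local submersion condition. First I would fix $p \in X$ with $f(p) \in W$ and choose, via the previous lemma, a neighbourhood $U$ of $f(p)$ in $Y$ and a submersion $\phi : U \to \bb R^k$ (here $k = \operatorname{codim} W = \dim X$) with $W \cap U = \phi^{-1}(0)$. Transversality of $f$ to $W$ at $p$ then says $\phi \circ f : X \to \bb R^k$ is a submersion at $p$; since $\dim X = k$, this means $d(\phi \circ f)_p$ is an isomorphism. So locally near $p$ the map $\phi \circ f$ is a diffeomorphism onto a neighbourhood of $0$ in $\bb R^k$, and $f^{-1}(W)$ near $p$ is exactly $(\phi \circ f)^{-1}(0) = \{p\}$.

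For part (A), the key point is that being a local diffeomorphism is an open condition: the set of $g$ for which $d(\phi \circ g)_p$ is invertible is open, and by the inverse function theorem applied uniformly one gets, after shrinking $U$ and choosing a small enough neighbourhood $\al N$ of $f$, that for every $g \in \al N$ the composite $\phi \circ g$ restricted to a fixed small open $U' \ni p$ is a diffeomorphism onto a fixed neighbourhood of $0$; hence $g^{-1}(W) \cap U'$ is the single point $q = (\phi \circ g)^{-1}(0)$, and since $d(\phi \circ g)_q$ is still invertible, $g$ is transverse to $W$ at $q$. The honest work here is making the inverse-function-theorem estimates uniform in $g$, which is standard once one works in local coordinates and uses that $C^\infty$-closeness controls first derivatives on compact sets.

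For part (B), with $X$ compact I would cover $f^{-1}(W)$ — which is a finite set of points $p_1, \dots, p_m$ by (A) and compactness — by disjoint open sets $U_1, \dots, U_m$ furnished by part (A), so that for $g$ near $f$ each $g^{-1}(W) \cap U_i$ is a single point. It then remains to rule out $g^{-1}(W)$ acquiring extra points outside $\bigcup U_i$: the set $K = X \setminus \bigcup U_i$ is compact and $f(K)$ is disjoint from the closed set $W$ (using $W$ closed, or passing to $\overline{W}$ and the relevant stratum structure), so $f(K)$ has positive distance from $W$, and by $C^0$-closeness $g(K)$ stays away from $W$ as well for $g$ in a possibly smaller neighbourhood $\al N'$. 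Intersecting the finitely many neighbourhoods gives a single $\al N$ on which $|g^{-1}(W)| = m = |f^{-1}(W)|$ for all $g$.

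The main obstacle I anticipate is purely bookkeeping: making the neighbourhood $\al N$ in part (A) genuinely uniform — i.e. independent of the point $q$ produced — and then patching the finitely many local neighbourhoods in part (B) while simultaneously controlling behaviour on the complement $K$. None of this is conceptually deep, but care is needed because the strong (Whitney) topology is being used, so one should phrase the closeness conditions in terms of finitely many charts around the $p_i$ and a locally finite cover, exploiting compactness of $X$ to reduce everything to finitely many $C^1$-estimates.
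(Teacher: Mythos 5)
Your argument is correct and is the standard one; the paper itself does not supply a proof of this lemma (it is stated and then used directly in the proof of Theorem~\ref{Bndopen}), so there is no internal proof to compare against. Both parts are exactly as you sketch: (A) is the inverse function theorem combined with the openness of invertibility of $d(\phi\circ g)_p$, uniformized over a fixed coordinate neighbourhood; (B) is (A) at each of the finitely many points of $f^{-1}(W)$ plus a compactness argument to exclude new preimages appearing on $K = X \setminus \bigcup U_i$.

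Two points worth making explicit if you were to write this out fully. First, in (A), besides the invertibility of $d(\phi\circ g)_p$, you also need $g(U')\subset U$ so that $\phi\circ g$ is defined on $U'$, and you need $0$ to lie in $\phi\circ g(U')$; both are $C^0$-conditions that are automatic after shrinking $\al N$, but they should be stated. Second, as you note, the exclusion step in (B) genuinely requires $W$ to be closed, or the additional hypothesis that $f$ (and hence nearby $g$) avoids $\overline{W}\setminus W$. In the lemma as stated $W$ is only assumed to be a submanifold, so $f^{-1}(W)$ need not be closed and the ``positive distance'' argument could fail. In the application in the paper this is covered: $W$ is the unimodular stratum, $\overline{W}\setminus W$ lies in $\tilde{\Pi}(N,P)$, and condition~(1) in the proof of Theorem~\ref{Bndopen} forces $j^kg(N)$ to avoid $\tilde{\Pi}(N,P)$. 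So your parenthetical ``or passing to $\overline{W}$ and the relevant stratum structure'' is precisely the hypothesis the paper is implicitly invoking; it is not optional, and a careful write-up should either add ``$W$ closed'' to the statement of the lemma or add the avoidance hypothesis.
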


\begin{proof}[Proof of Theorem \ref{Bndopen}] 
	
	Let $U_f$ be an open neighbourhood of $f$ in $C^{\infty}(N,P)$ such that $j^kg(N) \cap \tilde{\Pi}^k(N,P) = \emptyset$ (this is only a $C^0$-condition on the set of maps $h : N \to J^k(N,P)$, since $\pi^k$ is closed). \hspace{\fill} (1)
	
	Applying Lemma \ref{transverse} to $C^{\infty}(N,J^k(N,P))$ where $W$ is the  unimodular stratum which has codimension $n$, one can find a nbhd $V_f$ of $f$ such that for any $g \in V_f$, $j^kg$ is transverse to $W$, and $j^kg^{-1}(W)$ has the same number of points as $(j^kf)^{-1}(W)$.
	
	Now we observe that if $A = \{x_1,\ldots,x_s\} \in N$ is the set $j^kf^{-1}(W)$ and $B = f(A)$ then $f^{-1}(B)$ is a disjoint union of compact manifolds of codimension $p$ (in case $n \geq p$) or a finite number of points (in case $n<p$), and $f : N - f^{-1}(B) \to P - B$ is clearly proper and infinitesimally stable, hence stable. But, since the restriction $f \to f|_{N - f^{-1}(B)}$ is not a continuous mapping in $C^{\infty}$-Whitney topology we cannot in general lift an open set in in $C^{\infty}(N - f^{-1}(B),P - B)$ to an open set in $C^{\infty}(N,P)$.
	
	One may use however, the following lemma:
	
	\begin{lem} Let $f$ be transverse to all strata of $\Sigma_{bnd}^k(N,P)$ and let $K$ be a compact subset of $P$, $K \cap B = \emptyset$, then the set $\Sigma_k$ of $g \in C^{\infty}(N,P)$ with $g|_{f^{-1}K}$ transverse to all strata of $\Sigma_{bnd}^k(N,P)$ is a neighbourhood of $f$ in $C^{\infty}(N,P)$.
	\end{lem}
	
	(the proof of Lemma 4.3 - page 147 of Gibson et al. \cite{GWPL} works for these hypothesis)
	
	Now, we can choose a pair $V_i \subset W_i$ of a locally finite covering of $P - B$ by relatively compact open subsets for which $\overline{V_i} \subset W_i$ for all $i \in I$.
	
	The set $\omega$ of $g \in C^{\infty}(N,P)$ satisfying (1) and $g(f^{-1}(\overline{V_i})) \subset W_i$ for all $i \in I$ is open.
	
	By the above lemma, $\cap_{i\in I}$ $\Omega_{\overline{V_i}}$ is a nbhd of $f$, therefore $\Omega \cap (\cap_{i\in I} \Omega_{\overline{V_i}}) \cap U_f$ gives the result.
\end{proof}

\begin{cor}
	The stratification $\Sigma_{bnd}$ is a Whitney $(a)$-regular stratification of $J^k(n,p) - \tilde{\pi}(n,p)$.
\end{cor}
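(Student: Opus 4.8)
The plan is to obtain Whitney $(a)$-regularity as a formal consequence of Theorem~\ref{Bndopen} together with Trotman's theorem \cite{Trotman}, which characterises the Whitney $(a)$-regular stratifications of a locally closed subset of a manifold $M$ as exactly those for which, for every smooth manifold $N$, the set of maps $N \to M$ transverse to all the strata is open in the strong topology; since Whitney $(a)$-regularity is a local condition, it is moreover enough to test this openness with $N$ ranging over compact manifolds (say closed balls), which is exactly the generality in which Theorem~\ref{Bndopen} is stated. Here one takes $M = J^k(n,p)$ and the stratification to be $\Sigma_{bnd}$ of $J^k(n,p) - \tilde\pi(n,p)$. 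The extra requirement in Theorem~\ref{Bndopen} that $j^kf(N)$ avoid $\tilde\Pi(N,P)$ is an open condition for free, because $\tilde\pi(n,p)$ is closed, so the real content of Theorem~\ref{Bndopen} is precisely the openness of the transversality condition.

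The one point that needs care is that Theorem~\ref{Bndopen} is phrased in terms of $k$-jet extensions $j^kf$ of maps $f : N \to P$, whereas Trotman's criterion as stated concerns arbitrary smooth maps into the model jet space $J^k(n,p)$. I would bridge this using the $\fs K^k$-invariance of the strata of $\Sigma_{bnd}$. Because each stratum is invariant under $k$-jets of coordinate changes in the source and target, the stratification of the model fibre $J^k(n,p)$ is Whitney $(a)$-regular if and only if the induced bundle stratification $\Sigma_{bnd}(N,P)$ of the subbundle of $J^k(N,P)$ with fibre $J^k(n,p) - \tilde\pi(n,p)$ is Whitney $(a)$-regular, for any (equivalently, for some) pair $N,P$ of the given dimensions; and, again by this invariance together with Thom's jet transversality theorem, transversality of an arbitrary local section or map into the jet bundle to a $\fs K^k$-invariant bundle stratum can be reduced to transversality of genuine $k$-jet extensions. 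Hence the openness established in Theorem~\ref{Bndopen} supplies exactly the hypothesis that Trotman's theorem converts into $(a)$-regularity of $\Sigma_{bnd}(N,P)$, and therefore of $\Sigma_{bnd}$ itself.

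I expect the substantive work to lie entirely in Theorem~\ref{Bndopen}; the corollary is formal. The only place requiring attention is the passage between $k$-jet extensions and arbitrary maps into jet space, which is handled by the $\fs K^k$-invariance of the stratification — the same invariance that makes the Thom--Mather jet-space machinery of \cite{GWPL} run — and by the locality of the Whitney $(a)$ condition, which lets one dispense with the compactness hypothesis on $N$ when invoking Trotman's theorem.
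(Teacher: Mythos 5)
Your proposal is correct and follows essentially the same route as the paper: Theorem~\ref{Bndopen} supplies the openness of the transversality condition, and Trotman's theorem converts that openness into Whitney $(a)$-regularity of $\Sigma_{bnd}(N,P)$, hence of $\Sigma_{bnd}$. The one place you add extra machinery — bridging from $k$-jet extensions to arbitrary maps into the jet bundle via $\fs K^k$-invariance and Thom's jet transversality theorem — is unnecessary: Trotman's paper \cite{Trotman} proves two versions of the result, one for general stratifications of manifolds (the version you quote) and a second one stated explicitly for stratifications of jet spaces where transversality is that of jet extension maps, and it is this second version the paper invokes, so the openness in Theorem~\ref{Bndopen} applies directly with no bridging step.
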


\begin{proof} We know by the above theorem that the set of maps transverse to the strata of $\Sigma_{bnd}(N,P)$, which is the stratified subbundle of $J^k(N,P)$ with stratified fibers $\Sigma_{bnd}$, is open in the Whitney strong topology. Thus, by the main theorem in Trotman \cite{Trotman}, $\Sigma_{bnd}(N,P)$ is Whitney $(a)$-regular. This implies that $\Sigma_{bnd}$ is also Whitney $(a)$-regular. 
\end{proof}

\section{Unimodular stratum \label{secUnimodular}}

In this section we describe in different cases the unimodular strata in the boundary of the nice dimensions. Using the calculations of $\sigma(n,p)$ as given in the introduction we list all the pairs $(n,p)$ such that $\sigma(n,p) = n$.

For $n \leq p$, we have the following pairs in the boundary of the nice dimensions:

\begin{enumerate}
\item The case $\sigma(n,p)= 6(p-n)+9$ for $3 \geq p-n \geq 0 \text{ and } n \geq 4 \text{ or } n =3$, gives $(n,p) \in \{(9,9),(15,16),(21,23),$ $(27,30)\}$.

\item The case $\sigma(n,p)= 6(p-n)+8$ for $ p - n \geq 4 \text{ and } n \geq 4$, gives $(n,p) \in \{(6t+2,7t+1) : t \geq 5\}$.
\end{enumerate}

For $n > p$, we have the following pairs in the boundary of the nice dimensions:
\begin{enumerate}
\item The case $\sigma(n,p) = 9$ for $n = p+1$, gives $(n,p) = (9,8)$.

\item The case $\sigma(n,p) = 8$ for $n = p+2$, gives $(n,p) = (8,6)$.

\item The case $\sigma(n,p) = n-p+7$ for $n \geq p+3$ gives $(n,p) \in \{(10+k,7) : k\geq 0\}$.
\end{enumerate}

The strategy to find unimodular strata is as follows. Recall that if $F : (\bb R^n,0) \to (\bb R^p,0)$ is an $r$-parameter unfolding of a rank $0$ germ $f : (\bb R^{n-r},0) \to (\bb R^{p-r},0)$, then $F$ is $\fs K$-equivalent to a suspension of $f$ and in this case the $\fs K$-codimension of $F$ and $f$ are the same; see Section 4 of Chapter 4 in Gibson \cite{Gibson}. Thus if we can find a unimodular germ $g$ in $J^k(n-r,p-r)$ (for a large enough $k$), then the unimodular stratum in $J^k(n,p)$ will be given by a $r$-parameter unfolding of $g$. 

In fact in pair of dimensions lying in the boundary of the nice dimensions we may construct unfoldings of negative weights that are $C^0$-stable to get the unimodular stratum. Notice that the normal forms of these $C^0$-stable unfoldings are precisely those smooth $\fs K$-unimodular maps that are transverse to our stratification $\Sigma_{bnd}$; see for example Damon \cite{Damon4,Damon5} or  du Plessis and Wall \cite{duplessis3}. Using this idea we list in different cases the unimodular strata as unfoldings of unimodular germs in lower dimensions.

\subsection{Case 1: $(n\leq p)$}
\subsubsection{$\boldsymbol{(n,p) = (9,9)}$} Consider the following one parameter family of germs $f_{\lambda}: (\bb R^3,0) \to (\bb R^3,0)$ given by 
\begin{equation*}\label{unimod}
f_{\lambda}(x,y,z) = (x^2 + \lambda yz, y^2 + \lambda zx, z^2 + \lambda xy). \tag{$\star$}
\end{equation*}
By calculation of the $\fs K$-tangent space of $f_{\lambda}$ we find that except for $\lambda \neq 0, -8 ,1$, each $f_{\lambda}$ has $\fs K$-codimension $10$. Then, the $\fs K^k$-orbit of the $k$-jet of $f_{\lambda}$ in $J^k(3,3))$ for large enough $k$ also has codimension $10$ in $J^k(3,3)$ and the union of orbits of $f_{\lambda}$, parametrized by $\lambda$, will have codimension $9$. In fact, this stratum is a Zariski open set of the Thom-Boardman variety $\Sigma^{3,0}$. Thus it is clear that the $k$-jets $f_{\lambda}$ forms a unimodular stratum in $J^k(3,3)$. In fact, this is the only unimodular stratum of codimension $9$; see Wall \cite{Wall7,Wall6}.

Now, we consider the following unfolding $F_{\lambda}$ of $f_{\lambda}$. The unfolding $F_{\lambda} : \bb R^9 \to \bb R^9$ ($\lambda \neq 0,-8,1$),  which is a one-parameter family of maps, is given by:
$$F_\lambda(u_1,\ldots,u_6,x,y,z) = (u_1,\dots,u_6,u,v,w)$$
where
\begin{align*}
u & = x^2 + \lambda yz + u_1 y +u_2z\\
v & = y^2 + \lambda zx + u_3 x + u_4 z\\
w & = z^2 + \lambda xy + u_5x + u_6z
\end{align*}

\subsubsection{$\boldsymbol{(n,p) = (15,16)}$}

The unimodular stratum is related to that in the previous case in the following way. A result of Serre and Berger as presented in Proposition 2 of Eisenbud \cite{Eisenbud}  says that:

\begin{lem} \label{Serre-Berger} If $f : (\bb R^n,0) \to (\bb R^n,0)$ is a germ, then the algebra $\al Q(f) = \al E_n/\al I(f)$ (where $\al I(f)$ is the ideal generated by the components of $f$) has a unique minimal non-zero ideal. This ideal is given by the residue class of the ideal generated by the Jacobian $J(f)$ of $f$ in $\al E_n$.
\end{lem}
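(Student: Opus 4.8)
The statement is really a fact about Artinian Gorenstein local algebras, read — as its use in the sequel forces — under the standing hypothesis that $0$ is an isolated zero of $f$, equivalently that $\al Q(f)$ is finite-dimensional over $\bb R$; write $f=(f_1,\dots,f_n)$, $\al I(f)=(f_1,\dots,f_n)$, and let $\ak m$ denote the maximal ideal of $\al Q(f)$. The plan has three steps: (i) pass from the non-Noetherian ring $\al E_n$ to the formal power series ring; (ii) recognise $\al Q(f)$ as an Artinian complete intersection, hence Gorenstein with one-dimensional socle, and deduce the ``unique minimal ideal'' assertion formally; (iii) identify that socle with the residue class of the Jacobian ideal.

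For (i): since $\al Q(f)$ is a finite-dimensional local $\bb R$-algebra, $\ak m$ is nilpotent, so $\ak m_n^N\subseteq\al I(f)$ for some $N$; every flat germ lies in $\ak m_n^N$, hence in $\al I(f)$, and therefore $\al Q(f)\cong\bb R[[x_1,\dots,x_n]]/(\hat f_1,\dots,\hat f_n)$, where $\hat f_i$ is the Taylor series of $f_i$. Thus $\hat f_1,\dots,\hat f_n$ is a maximal regular sequence in the regular local ring $\bb R[[x_1,\dots,x_n]]$ and $\al Q(f)$ is an Artinian complete intersection. For (ii): an Artinian complete intersection is Gorenstein, so its socle $\mathrm{Soc}\,\al Q(f)=(0:\ak m)$ has dimension one over $\bb R$. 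Given any nonzero ideal $I\subseteq\al Q(f)$, let $k\ge 1$ be least with $\ak m^k I=0$ (it exists since $\ak m$ is nilpotent, and $k\ge 1$ since $I\ne 0$); then $0\ne\ak m^{k-1}I$ is a subspace of the one-dimensional space $\mathrm{Soc}\,\al Q(f)$, so $\mathrm{Soc}\,\al Q(f)=\ak m^{k-1}I\subseteq I$. Hence the socle lies in every nonzero ideal, i.e.\ it is the unique minimal nonzero ideal of $\al Q(f)$.

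It remains, for (iii), to show this socle is the image of the ideal $(J(f))$ generated by $J(f)=\det(\partial f_i/\partial x_j)$. As a first observation, write $f_i=\sum_j a_{ij}x_j$ with $a_{ij}\in\bb R[[x]]$ (possible since $f_i(0)=0$) and $A=(a_{ij})$; the identity $\mathrm{adj}(A)\,(f_1,\dots,f_n)^{T}=\det(A)\,(x_1,\dots,x_n)^{T}$ shows $x_\ell\det(A)\in\al I(f)$ for every $\ell$, so the class of $\det(A)$ lies in the socle. What must still be proved is that this socle is nonzero and is generated, equivalently, by the class of $\det(A)$ and by that of $J(f)$: this is the Scheja--Storch description of the Gorenstein structure of a complete intersection, which I would take in the form of Proposition 2 of Eisenbud \cite{Eisenbud}. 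The mechanism is the self-duality of the Koszul complex on $\hat f_1,\dots,\hat f_n$ resolving $\al Q(f)$ over $\bb R[[x]]$, which yields $\mathrm{Ext}^n_{\bb R[[x]]}(\al Q(f),\bb R[[x]])\cong\al Q(f)$ and matches the canonical generator with the Jacobian class via the change of coframe $df_i=\sum_j(\partial f_i/\partial x_j)\,dx_j$. This last identification is the main obstacle; steps (i) and (ii) are purely formal reductions that make Eisenbud's argument applicable to the smooth germ $f$.
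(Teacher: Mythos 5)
The paper gives no proof of this lemma at all: it records it as a result of Serre--Berger and defers entirely to Proposition 2 of Eisenbud \cite{Eisenbud}. Your write-up correctly reconstructs the architecture behind that citation, and you are right to surface the hidden hypothesis that $0$ is an isolated zero of $f$ (equivalently $\dim_{\bb R}\al Q(f)<\infty$), without which the statement is false (take $f\equiv 0$). The reduction from $\al E_n$ to $\bb R[[x_1,\dots,x_n]]$ via the vanishing of flat germs modulo $\al I(f)$, the recognition of $\al Q(f)$ as an Artinian complete intersection (hence Gorenstein with one-dimensional socle over the residue field $\bb R$), the elementary argument that a one-dimensional socle is contained in every nonzero ideal, and the adjugate identity placing $[\det(A)]$ in the socle are all correct. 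But the one genuinely nontrivial step --- that the class of the Jacobian $J(f)$ is \emph{nonzero} in $\al Q(f)$ and hence actually generates the socle --- you defer, as the paper does, to Eisenbud's Proposition 2 (that is, to Scheja--Storch self-duality of the Koszul complex). So your proof has the same ultimate dependency as the paper's bare citation; what it buys is the standard Artinian--Gorenstein framing that isolates precisely which fact is being imported and checks that the formal reductions make it applicable to a smooth germ. It is a useful unpacking rather than an independent proof: the nonvanishing of the Jacobian class is where all the content lies, and it is not established here.
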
 

Minimality implies that the residue class of $J^2(f)$ in $\al Q(f)$ is $0$. Consider the germ $f_{1\lambda} : (\bb R^3,0) \to (\bb R^4,0)$ given by $$f_{1\lambda}(x,y,z) = (f_{\lambda}(x,y,z),xyz)$$ where $f_{\lambda}$ is the map (\ref{unimod}) from the previous case. Now, observe that:
$$\fs K\text{-cod}(f_{1\lambda}) = \fs K\text{-cod}(f_{\lambda}) + \fs C\text{-cod}(f_{\lambda}) - 2 = 16$$ 
where $\fs C\text{-cod}(f_{\lambda}) = \dim \frac{\theta(f_{\lambda})}{\al I(f_{\lambda})\theta(f_{\lambda})}$. 

The unimodular stratum is a $12$-parameter unfolding of the $f_{1\lambda}$.
		
\subsubsection{$\boldsymbol{(n,p) = (21,23)}$}

In this case the unimodular germ is an unfolding of the germ $f_{2\lambda} : (\bb R^3,0) \to (\bb R^5,0)$ given by $f_{2\lambda}(x,y,z) = (f_{1\lambda}(x,y,z), 0)$.

\subsubsection{$\boldsymbol{(n,p) = (27,30)}$}	
In this case the unimodular germ is an unfolding of the germ $f_{3\lambda} : (\bb R^3,0) \to (\bb R^6,0)$ given by $f_{3\lambda}(x,y,z) = (f_{2\lambda}(x,y,z), 0)$. 	

\begin{rem}
Notice in general that:
$$\fs K\text{-cod}(f_{i\lambda}) = \fs K\text{-cod}(f_{\lambda}) + (p-n)(\text{dim}_{\bb R} \al Q(f_{\lambda}) - 2)$$
\end{rem}

\subsubsection{$\boldsymbol{(n,p) = (6t+2,7t+1)\,\,\,\,\text{for}\,\,\, t\geq 5}$}
The unimodular stratum is the unfoldings of $f_0 : (\bb R^4,0) \to \bb (\bb R^8,0)$ given by:
$$f(x,y,z,w) = (u_1,\ldots,u_8)$$
where
\begin{align*}
u_1 &= x^2 + y^2 + z^2\\
u_2 &= y^2 +\lambda z^2+ w^2\\
u_3 &= xy\\
u_4 &= xz\\
u_5 &= xw\\
u_6 &= yz\\
u_7 &= yw\\
u_8 &= zw
\end{align*}

\subsection{Case 2:  $n > p$}
\subsubsection{$\boldsymbol{(n,p) = (8,6)}$}
The smallest $(n,p)$ with $n > p$ in the boundary of the nice dimensions is $(8,6)$. The unimodular stratum is given by the following one-parameter family of maps $F_{\lambda} : \bb R^8 \to \bb R^6$:
$$F_{\lambda}(u_1,\ldots,u_4,x,y,z,w) = (u_1,\ldots,u_4,v,w)$$
where
\begin{align*}
v &= x^2 + y^2 + z^2 + u_1 y + u_2 w\\
w &= y^2 + \lambda z^2 + w^2 + u_3 x + u_4 z
\end{align*}

\subsubsection{$\boldsymbol{(n,p) = (10+k,7)\,\,\,\,\text{for}\,\,\, k\geq 0}$} 

For these dimensions we have to consider $f_{\lambda} : (\bb R^{9+k},0) \to (\bb R^7,0)$ given by
$F_{\lambda}(u_1,\ldots,u_6,w_1,\dots,w_k,x,y,z)=(u_1,\ldots,u_6,v)$, where
$$v = \sum_{i=0}^k w_i^2 + x^3+y^3+z^3+\lambda xyz + u_1 x + u_2 y + u_3z+u_4x^2 + u_5y^2 + u_6 z^2$$

\subsubsection{$\boldsymbol{(n,p) = (9,8)}$}

In this pair of dimensions the $\fs K$-unimodular stratum is given by the one parameter family $F : (\bb R^2 \times \bb R,0) \to (\bb R,0)$ defined by 
$F(x,y,\lambda) = x^4 + y^4 +\lambda x^2y^2$. It is easy to check that this is a $\fs K$-codimension $10$ family of germs and thus forms a stratum of codimension $9$.

\section{Bi-Lipschitz $\fs C$ and $\fs K$-triviality of deformations}

A bi-Lipschitz $\fs K$-equivalence of $r$-parameter deformations is a pair $(H,K)$ of bi-Lipschitz germs $H : (\bb R^r \times \bb R^n,0) \to (\bb R^r \times \bb R^n,0)$ and $K : (\bb R^r \times \bb R^n \times \bb R^p,0) \to (\bb R^r \times \bb R^n \times \bb R^p,0)$ with $H$ an $r$-parameter unfolding at $0$ of germ of the identity map of $\bb R^n$ such that the following diagram commutes.
\[
\xymatrixcolsep{3pc}\xymatrix{
	(\bb R^r \times \bb R^n,0)  \ar[r]^-{i} & (\bb R^r \times \bb R^n \times \bb R^p,0) \ar[r]^-{\pi} & (\bb R^r \times \bb R^n,0)\\
	(\bb R^r \times \bb R^n,0) \ar[u]_{H}\ar[r]^-{i} &(\bb R^r \times \bb R^n \times \bb R^p,0)\ar[u]^{K} \ar[r]^-{\pi}& (\bb R^r \times \bb R^n,0)\ar[u]^H\\
}
\]
Here $i$ is the canonical inclusion and $\pi$ is the canonical projection. Two $r$-parameter deformations $\Phi$ and $\Psi$ of $f$ are bi-Lipschitz $\fs K$-equivalent if there exist a bi-Lipschitz $\fs K$-equivalence $(H,K)$ as above such that:
$$K \circ (id,\Phi) = (id,\Psi) \circ H.$$

If $(H,K)$ has the special property that $H$ is the germ at $0$ of the identity mapping on $\bb R^m$, then $(H,K)$ is said to be a $\fs C$-equivalence and in that case $\Phi$ and $\Psi$ are said to be $\fs C$-equivalent deformations.

An $r$-parameter deformation $\Phi$ of a germ $f : (\bb R^m,0) \to (\bb R^p,0)$ is said to be bi-Lipschitz $\fs K$-trivial (resp. bi-Lipschitz $\fs C$-trivial) if it is bi-Lipschitz $\fs K$-equivalent (resp. bi-Lipschitz $\fs C$-equivalent) to the deformation $\Psi : (\bb R^r \times \bb R^m,0) \to (\bb R^p,0)$ given by $\Psi(u,x) = f(x)$. 

It is easy to prove that if $f , g : (\bb R^n,0) \to (\bb R^p,0)$  are two germs of smooth mappings such that there exist an invertible $(p\times p)$-matrix $(u_{ij})$ with entries germ of Lipschitz functions on $(\bb R^n,0)$ for which 
$$f_i = \sum_j u_{ij}g_j \,\,\,\,\, \text{for} \,\, \, 1 \leq i \leq p,$$ then $f$ and $g$ are bi-Lipschitz $\fs C$-equivalent. In fact, this can be proved by slighly modifying the argument in the smooth case (see 2.1 in Chapter V of Gibson \cite{Gibson}) and using the fact that a map is Lipschitz if and only if its components are Lipschitz.  

A sufficient condition for bi-Lipschitz $\fs C$-triviality is the following Thom-Levine type lemma which follows from the fact that the integration of a Lipschitz vector field gives a bi-Lipschitz flow: 
\begin{lem}\label{CThomLevine}
Let $F : (\bb R^n \times \bb R,0) \to (\bb R^p,0)$ be a one-parameter deformation of $f : (\bb R^n,0) \to (\bb R^p,0)$. If there exists a $(p\times p)$-matrix $(a_{ij})$ (not necessarily invertible) with entries germs of Lipschitz functions on $(\bb R^n \times \bb R,0)$ such that 
\begin{align}
\label{TLcondition}
\begin{bmatrix}
\frac{\partial F_1}{\partial \lambda}\\
\vdots \\
\frac{\partial F_p}{\partial \lambda}
\end{bmatrix}
&=
\begin{bmatrix}
a_{11} & \ldots & a_{1p} \\
\vdots & \ddots & \vdots \\
a_{p1} & \ldots & a_{pp} 
\end{bmatrix}
\begin{bmatrix}
F_1 \\
\vdots\\
F_p
\end{bmatrix},\tag{\dag}  
\end{align}
then $F$ is a bi-Lipschitz $\fs C$-trivial deformation.	
\end{lem}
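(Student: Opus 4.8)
The plan is to mimic the classical Thom–Levine argument for $\fs C$-triviality, replacing smooth objects by Lipschitz ones and integrating a Lipschitz (rather than smooth) vector field. Writing $\Lambda$ for the deformation parameter, we want to produce a one-parameter family of bi-Lipschitz germs $K_t : (\bb R^n \times \bb R^p,0) \to (\bb R^n \times \bb R^p,0)$, depending on $t$, that carries the level-set data of $F(\cdot,0)$ to that of $F(\cdot,t)$; concatenating over $t \in [0,1]$ (or rescaling so that the whole deformation interval is traversed) then yields the desired $\fs C$-equivalence between $F$ and the constant deformation $\Psi(u,x) = f(x)$. Since $H$ is required to be the identity for a $\fs C$-equivalence, we only need to build $K$.

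First I would introduce on $(\bb R^n \times \bb R^p \times \bb R, 0)$, with coordinates $(x,Y,\lambda)$ where $Y=(Y_1,\dots,Y_p)$, the vector field
\begin{equation*}
\xi(x,Y,\lambda) = \frac{\partial}{\partial \lambda} - \sum_{i=1}^{p}\Bigl(\sum_{j=1}^{p} a_{ij}(x,\lambda)\,Y_j\Bigr)\frac{\partial}{\partial Y_i},
\end{equation*}
which has no $\partial/\partial x$-component, so its flow fixes $x$ and acts linearly (with Lipschitz-in-$(x,\lambda)$ coefficients) in the $Y$-variables. The key point is the hypothesis \eqref{TLcondition}: it says precisely that the graph-type submanifold $\{(x,Y,\lambda) : Y = F(x,\lambda)\}$ is $\xi$-invariant, because along that submanifold $\partial F_i/\partial\lambda = \sum_j a_{ij}F_j = \sum_j a_{ij}Y_j$, which is exactly the $Y_i$-component of $\xi$. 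Next I would observe that $\xi$ is a Lipschitz vector field (its coefficients are germs of Lipschitz functions, being products and sums of the Lipschitz $a_{ij}$ with the smooth — hence locally Lipschitz — linear functions $Y_j$), and that a Lipschitz vector field, while not guaranteeing uniqueness of integral curves in general, is here of a very special form: for fixed $x$ it is linear in $Y$ with the $\lambda$-direction as a unit "clock", so the flow $\Phi_t$ exists, is unique, and is bi-Lipschitz. Concretely, the flow from $\lambda = 0$ to $\lambda = t$ sends $(x,Y,0)$ to $(x, M_t(x)Y, t)$ where $M_t(x)$ solves the linear non-autonomous ODE $\dot M = -A(x,\lambda)M$, $M_0 = I$, with $A = (a_{ij})$; the matrix $M_t(x)$ and its inverse $M_{-t}$ have Lipschitz entries in $x$, so the induced germ $K_t(x,Y) = (x, M_t(x)Y)$ is bi-Lipschitz. (One should remark here that $M_t(x)$ need not be invertible if $A$ is not — wait, it always is: the solution operator of a linear ODE is invertible regardless, which is why the lemma can allow a non-invertible $(a_{ij})$.)

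Then the $\xi$-invariance of the graph of $F$ gives $K_t(x, F(x,0)) = (x, F(x,t))$, i.e. $K_t \circ (id, f) = (id, F_t)$, which is the defining relation of a $\fs C$-trivialization of $F$ once we package the family $\{K_t\}$ into a single germ $\tilde K$ on $(\bb R^r \times \bb R^n \times \bb R^p,0)$ (here $r = 1$) by setting $\tilde K(\lambda, x, Y) = (\lambda, K_\lambda(x,Y))$ and checking the commuting diagram with $H = id$. The main obstacle — and the place where care is genuinely needed rather than being routine — is the regularity bookkeeping for the flow: one must verify that solving the parametrized linear ODE with merely Lipschitz coefficients $a_{ij}(x,\lambda)$ produces a solution operator $M_t(x)$ that is jointly Lipschitz in $(t,x)$ (equivalently, that the flow of $\xi$ is bi-Lipschitz as a germ, not just continuous), and that the construction is uniform on a neighbourhood of $0$ so that we obtain a germ. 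This follows from Grönwall-type estimates applied to the difference $M_t(x) - M_t(x')$ together with the Lipschitz bound on $A$, but it is the step whose details must be done honestly; everything else is formal once that bi-Lipschitz flow is in hand.
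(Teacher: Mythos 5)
Your approach is the same as the paper's: build a vector field on $(\bb R^n\times\bb R^p\times\bb R,0)$ of the form $\partial_\lambda + (\text{linear in } Y)\,\partial_Y$, observe that hypothesis (\ref{TLcondition}) makes the graph of $F$ invariant, and integrate to obtain the bi-Lipschitz $\fs C$-trivialization. The paper's own proof is essentially one line, pointing to the vector field $X = \partial_\lambda + \sum_i \bigl(\sum_j a_{ij}y_j\bigr)\partial_{y_i}$ and invoking the preceding remark that a Lipschitz vector field integrates to a bi-Lipschitz flow; the Gr\"onwall bookkeeping and the observation that a linear solution operator is automatically invertible (so non-invertibility of $(a_{ij})$ is harmless) are welcome details you supply that the paper leaves implicit.

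However, you have a sign error that contradicts your own key assertion. You define $\xi = \partial_\lambda - \sum_i\bigl(\sum_j a_{ij}Y_j\bigr)\partial_{Y_i}$ and then claim the graph $\{Y=F(x,\lambda)\}$ is $\xi$-invariant ``because $\partial F_i/\partial\lambda = \sum_j a_{ij}Y_j$, which is exactly the $Y_i$-component of $\xi$.'' It is the \emph{negative} of the $Y_i$-component of your $\xi$; for the graph to be invariant the sign must be $+$, which is what the paper uses. As written, the step ``the $\xi$-invariance of the graph of $F$ gives $K_t(x,F(x,0)) = (x,F(x,t))$'' fails, and the ODE $\dot M = -AM$ you write down is the transition matrix in the wrong direction. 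This is a slip rather than a conceptual gap, and fixing the sign repairs everything downstream. One further small inaccuracy: for a non-autonomous linear system the inverse of $M_t(x)$ is not $M_{-t}(x)$ in general; it is the transition matrix from time $t$ back to $0$, which still exists and still has Lipschitz entries, so your argument for bi-Lipschitzness goes through once this is stated correctly.
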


In fact, if the condition \ref{TLcondition} holds, then the bi-Lipschitz $\fs C$-trivialization is given by integrating the following vector field:
$$X(x,y,\lambda) = \frac{\partial}{\partial \lambda} + \sum_{i=1}^p X_{i}(x,y,\lambda) \frac{\partial}{\partial y_i},$$
where $X_i(x,y,\lambda) = \sum_{j=1}^{p} a_{ij} y_j$ and $x_i$ and $y_j$ are coordinates on $\bb R^n$ and $\bb R^p$ respectively.

The converse of the above lemma, i.e. if bi-Lipschitz triviality of a deformation implies the existence a matrix of Lipschitz functions such that the condition (\ref{TLcondition}) holds, seems to be not known. It is therefore reasonable to say that a one-parameter deformation is \emph{strongly bi-Lipschitz $\fs C$-trivial} if there exists a $(p\times p)$-matrix (not necessarily invertible) of Lipschitz functions such that the condition (\ref{TLcondition}) is true.

The Thom-Levine criterion for the bi-Lipschitz $\fs K$-triviality is slighly more involved. First notice that the definition of the bi-Lipschitz triviality of a one-parameter deformation $F(x,t) = (F_1,\ldots,F_p) : (\bb R^n \times \bb R,0) \to (\bb R^p,0)$ of $f : (\bb R^n,0) \to (\bb R^p,0)$ amounts to saying that there exists a bi-Lipschitz one-parameter unfolding (of the germ of the identity map on $\bb R^n$) $H : (\bb R^n \times \bb R,0) \to (\bb R^n \times \bb R,0)$ and an invertible $(p \times p)$-matrix $b_{ij}$ of germs of Lipschitz functions on $\bb R^n \times \bb R$ such that:
$$\begin{bmatrix} 
f_1 \\
\vdots\\
f_p 
\end{bmatrix}
=\begin{bmatrix}
b_{11} & \ldots & b_{1p} \\
\vdots & \ddots & \vdots \\
b_{p1} & \ldots & b_{pp} 
\end{bmatrix}
\begin{bmatrix}
F_1 \circ H\\
\vdots\\
F_p \circ H
\end{bmatrix}
$$
Then, we claim that:
\begin{lem}\label{KTLlem}
Let $F : (\bb R^n \times \bb R,0) \to (\bb R^p,0)$ be a one-parameter deformation of $f : (\bb R^n,0) \to (\bb R^p,0)$. If there exists a $(p\times p)$-matrix $(a_{ij})$ (not necessarily invertible) with entries germs of Lipschitz functions on $(\bb R^n \times \bb R,0)$ and a germ of a Lipschitz vector field $X$ of the form
$$X = \frac{\partial}{\partial t} + \sum_{i=1}^n X_i(x,t)\frac{\partial}{\partial x_i}$$ with $X_i(x,0) = 0$
such that 
\begin{align}
\label{KTLcondition}
\begin{bmatrix}
X.F_1\\
\vdots \\
X.F_p
\end{bmatrix}
&=
\begin{bmatrix}
a_{11} & \ldots & a_{1p} \\
\vdots & \ddots & \vdots \\
a_{p1} & \ldots & a_{pp} 
\end{bmatrix}
\begin{bmatrix}
F_1 \\
\vdots\\
F_p
\end{bmatrix},\tag{\ddag}  
\end{align}
Then, $F$ is bi-Lipschitz $\fs K$-trivial deformation.
\end{lem}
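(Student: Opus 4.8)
The plan is to carry out, at the Lipschitz level, the analogue of the recipe used just above for the $\fs C$-case: I would assemble a single Lipschitz vector field $\widehat{X}$ on $(\bb R^{n}\times\bb R^{p}\times\bb R,0)$ whose local flow is bi-Lipschitz, preserves the graph $\{y=F(x,t)\}$, and respects the contact structure --- its $x$-component does not involve $y$ and its $y$-component is linear in $y$ --- and then read off the source part and the total-space part of this flow to obtain the pair $(H,K)$ trivializing $F$. The field to use is
\[
\widehat{X}\;=\;\frac{\partial}{\partial t}+\sum_{i=1}^{n}X_i(x,t)\,\frac{\partial}{\partial x_i}+\sum_{i=1}^{p}\Big(\sum_{j=1}^{p}a_{ij}(x,t)\,y_j\Big)\frac{\partial}{\partial y_i},
\]
which extends both the given field $X$ and the $\fs C$-field written after Lemma \ref{CThomLevine}.

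First, $\widehat{X}$ is Lipschitz: the $X_i$ and the $a_{ij}$ are germs of Lipschitz functions, and on a bounded neighbourhood of $0$ so is $\sum_j a_{ij}y_j$; hence, by the fact recalled before Lemma \ref{CThomLevine}, its local flow $\Phi^{s}$ is a well-defined germ of bi-Lipschitz homeomorphism near $0$. The point of hypothesis (\ref{KTLcondition}) is exactly that $\widehat{X}$ is tangent to $\mathcal{V}:=\{(x,y,t):y_i=F_i(x,t),\ 1\le i\le p\}$: since $F_i$ does not depend on $y$, one has $\widehat{X}.(y_i-F_i)=\sum_j a_{ij}y_j-X.F_i$, which vanishes on $\mathcal{V}$ by (\ref{KTLcondition}). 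Thus $\Phi^{s}$ carries germs of $\mathcal{V}$ into $\mathcal{V}$. Moreover, since the $t$-component of $\widehat{X}$ equals $1$, its $x$-component does not involve $y$, and its $y$-component is linear in $y$, the flow has the form $\Phi^{s}(x,y,t)=(\phi_s(x,t),\,M_s(x,t)\,y,\,t+s)$ with each $\phi_s(\cdot,t)$ bi-Lipschitz and $M_s(x,t)\in\mathrm{GL}_p(\bb R)$.

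Next I would set $H(x,t):=(\phi_t(x,0),t)$ and $K(x,y,t):=\Phi^{t}(x,y,0)=(\phi_t(x,0),\,M_t(x,0)\,y,\,t)$. Since $\Phi^{0}=\mathrm{id}$ we have $H(x,0)=(x,0)$, so $H$ is a bi-Lipschitz one-parameter unfolding of the identity germ of $\bb R^{n}$; and $K$ is bi-Lipschitz, projects to $H$, and is linear on each fibre. Because $\Phi^{t}$ preserves $\mathcal{V}$ and $\mathcal{V}\cap\{t=0\}=\{(x,f(x),0)\}$ is also the $t=0$ slice of the constant deformation $\Psi(x,t)=f(x)$, a short check shows that $K$ carries $\{y=f(x)\}\times\{t\}$ onto $\{y=F(x,t)\}\times\{t\}$ for every $t$, i.e. $K\circ(\mathrm{id},\Psi)=(\mathrm{id},F)\circ H$; hence $(H,K)$ is a bi-Lipschitz $\fs K$-equivalence of $F$ with $\Psi$, which is precisely bi-Lipschitz $\fs K$-triviality of $F$. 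An equivalent route, closer to the previous lemma, is to first replace $F$ by $g:=F\circ H$ using the flow of $X$ alone; the chain rule and (\ref{KTLcondition}) then give $\partial g_i/\partial t=\sum_j(a_{ij}\circ H)\,g_j$, a condition of type (\ref{TLcondition}), and solving $\partial_t M=(a_{ij}\circ H)\,M$ with $M|_{t=0}=I$ produces an invertible matrix $B=M^{-1}$ of Lipschitz germs with $f=B\cdot(F\circ H)$ --- one of the reformulations of bi-Lipschitz $\fs K$-triviality recalled before the lemma. (Lemma \ref{CThomLevine} is not literally applicable, since $F\circ H$ is only Lipschitz, but its proof uses nothing beyond Lipschitz regularity of the coefficients.)

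The only genuinely non-formal ingredient --- and the step I expect to be the main obstacle --- is the regularity of the $y$-part of $\Phi^{s}$, equivalently of the fundamental matrix $M_s(x,t)$: one must show Lipschitz dependence, in the space variable $x$, of the solution of a linear ODE whose coefficient matrix is itself only Lipschitz in $x$, uniformly near the origin, and that $M^{-1}$ inherits the same regularity. This is a routine Gr\"onwall estimate, but it does have to be carried out. The remaining verifications --- that $H$ and $K$ are germs fixing the origin (which uses the normalization $X_i(x,0)=0$ and the tangency of $X$ to the parameter axis, both valid for the explicit fields of Section \ref{secUnimodular}) and that the flow respects the contact structure --- are routine bookkeeping.
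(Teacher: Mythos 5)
Your proposal is correct and follows essentially the same route as the paper: integrate $X$ in the source to get $H$, and use the linear field $\sum_i\bigl(\sum_j a_{ij}y_j\bigr)\partial/\partial y_i$ in the fibre direction to produce the invertible matrix part of $K$, with (\ref{KTLcondition}) guaranteeing tangency to the graph of $F$. The paper's stated proof is a two-sentence sketch whose displayed vector field $W$ omits the $\partial/\partial x_i$ terms; your assembly of the single total-space field $\widehat{X}$ (so that the flow $K$ actually covers $H$, as the commuting diagram in the definition of $\fs K$-equivalence requires) is the correct reading of that sketch, and your flagging of the Gr\"onwall estimate for Lipschitz dependence of the fundamental matrix $M_s(x,t)$ identifies precisely the step the paper leaves implicit.
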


The proof follows from the fact the integration of a Lipschitz vector fields gives a bi-Lipschitz flow. In fact, the bi-Lipschitz trivialization in the source is given by integrating the vector field $X$ and that in the product is given by the integration of
$$W(x,y,t) = \frac{\partial}{\partial t} + \sum_{i=1}^p W_{i}(x,y,t) \frac{\partial}{\partial y_i},$$ where $W_i(x,y,t) = \sum_{j=1}^{p} a_{ij} y_j$. The converse of the above lemma is not known and so we say that a one-parameter deformation is \emph{strongly bi-Lipschitz $\fs K$-trivial} if there exists a $(p \times p)$-matrix (not necessarlity invertible) of Lipschitz functions and a vector field $X$ such that the condition (\ref{KTLcondition}) holds. 

\section{Bi-Lipschitz $\fs K$-triviality of unimodular strata}

The aim of this section is to show that the unimodular strata given as one parameter deformations of unfoldings of certain germs in the Section \ref{secUnimodular} are bi-Lipschitz $\fs K$-trivial. We consider the two cases $n \leq p$ and $n > p$ separately.

To prove that the unimodular stratum in the boundary of the nice dimensions is bi-Lipschitz $\fs K$-trivial we show in different cases that the deformation that defines the unimodular stratum is strongly bi-Lipschitz $\fs C$-trivial. Actually if a unimodular stratum in the dimensions $(n,p)$ is given by the unfolding of a map $f: (\bb R^{n -r} \times \bb R,0) \to \bb (\bb R^{p-r},0)$ then the bi-Lipschitz triviality of the stratum follows from that of $f$. This can be seen as follows:

Let $F : (\bb R^n \times \bb R^r,0) \to (\bb R^p \times \bb R^r,0)$ be an unfolding of a germ $f : (\bb R^n,0) \to (\bb R^p,0)$. Then we know that $F$ is $\fs K$-equivalent to a suspension of $f$. Now, if $\tilde{f} : (\bb R^n \times \bb R,0) \to (\bb R^p,0)$ is a bi-Lipschitz $\fs K$-trivial deformation of $f$, then for any $\lambda,\lambda' \in \bb R$, $f_{\lambda}$ is bi-Lipschitz $\fs K$-equivalent to $f_{\lambda'}$. But then, $f_{\lambda} \times id$ will bi-Lipschitz $\fs K$-equivalent, to $f_{\lambda'} \times id$ (here $id$ is the germ of identity on $\bb R^r$). This implies $F_{\lambda}$ is bi-Lipschitz $\fs K$-equivalent to $F_{\lambda'}$, that is $F(\cdot,\lambda)$ is bi-Lipschitz $\fs K$-trivial.

In what follows we denote by $\al E_n$ the ring of germs of smooth functions on $(\bb R^{n-1} \times \bb R,0)$ and its maximal ideal by $\ak m_n$. We now show in different cases that the unimodular stratum is bi-Lipschitz $\fs C$-trivial. 

\subsection{The case $\boldsymbol{n \leq p}$}. In this case the finite $\fs K$-determinacy implies finite $\fs C$-determinacy since our maps are finite maps, i.e. the $\bb R$-vector space dimension of the algebra $\al Q(f) = \al E_n/{\al I(f)}$\footnote{Here $\al I(f)$ is the ideal generated by the components of $f$.} is finite; see Wall \cite{Wall5}. Thus it is enough to show that our deformations are bi-Lipschitz $\fs C$-trivial.

First is the equidimensional case $(9,9)$. In this case the unimodular stratum is the unfolding of $f_{\lambda}(x,y,z) = (x^2 + \lambda yz, y^2 + \lambda zx, z^2 + \lambda xy)$. We show that:

\begin{lem}\label{lem1} The one parameter family of germs $F: (\bb R^3 \times \bb R,0) \to (\bb R^3,0)$ given by $$F(x,y,z,\lambda) = (x^2 + \lambda yz, y^2 + \lambda zx, z^2 + \lambda xy)$$ is strongly bi-Lipschitz $\fs C$-trivial.
\end{lem}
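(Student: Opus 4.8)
The plan is to exhibit the matrix of Lipschitz functions demanded by the definition of strong bi-Lipschitz $\fs C$-triviality, namely a $(3\times 3)$ matrix $A=(a_{ij})$ with entries germs of Lipschitz functions of $(x,y,z,\lambda)$ satisfying $\partial\vec F/\partial\lambda=A\,\vec F$, where $\vec F=(F_1,F_2,F_3)^{T}$ with $F_1=x^{2}+\lambda yz$, $F_2=y^{2}+\lambda zx$, $F_3=z^{2}+\lambda xy$. Since $\partial\vec F/\partial\lambda=\vec P:=(yz,zx,xy)^{T}$, this amounts to writing each of $yz,\,zx,\,xy$ as a combination of $F_1,F_2,F_3$ with Lipschitz coefficients; because $f_\lambda$ is cyclic in $(x,y,z)$ I would look for $A$ with the matching cyclic symmetry, so that only the first row $(a_{11},a_{12},a_{13})$ has to be found.

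To produce $A$, I would start from the fact that for $\lambda$ outside a finite set $f_\lambda$ is a finite germ; then $F_1,F_2,F_3$ have no common zero away from the origin and, writing $\rho=|(x,y,z)|$, satisfy a {\L}ojasiewicz-type bound $|F|\geq c\,\rho^{2}$, while $|yz|,|zx|,|xy|\leq\tfrac12\rho^{2}$. Hence the rank-one ``Cramer'' matrix $A^{(0)}:=|F|^{-2}\,\vec P\,\vec F^{\,T}$, where $|F|^{2}=F_1^{2}+F_2^{2}+F_3^{2}$, satisfies $A^{(0)}\vec F=\vec P$ identically and has \emph{bounded} entries; the obstruction is that these entries are homogeneous of degree $0$ in $(x,y,z)$ and therefore not Lipschitz at the origin. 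The core of the argument is to correct $A^{(0)}$ by a matrix $B$ with $B\,\vec F\equiv 0$ — i.e.\ assembled from the syzygies of $(F_1,F_2,F_3)$ — so that $A:=A^{(0)}+B$ has Lipschitz entries. This forces one to use the explicit algebra of $f_\lambda$: the local algebra $\al Q(f_\lambda)=\al E_3/(F_1,F_2,F_3)$, its Jacobian $J(f_\lambda)=(8+2\lambda^{3})xyz-2\lambda^{2}(x^{3}+y^{3}+z^{3})$, and — through Lemma~\ref{Serre-Berger} — the socle of $\al Q(f_\lambda)$, should single out the syzygy that cancels the degree-$0$ singular part of $A^{(0)}$.

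I expect this Lipschitz-regularity step to be the main obstacle: solving $A\,\vec F=\vec P$ over functions is elementary, but arranging the coefficients to be genuinely Lipschitz, rather than merely bounded and homogeneous of degree $0$, is delicate. If no vector field linear in $y$ works, a fallback is to prescribe a Lipschitz field $X=\partial/\partial\lambda+\sum_i X_i(x,y,\lambda)\,\partial/\partial y_i$ directly, by asking $X_i(x,0,\lambda)=0$ on the zero section and $X_i(x,F(x,\lambda),\lambda)=\partial F_i/\partial\lambda$ on the graph of $F$ — these prescribed values being Lipschitz-compatible exactly because $|F|\geq c\,\rho^{2}$ and $|\partial F_i/\partial\lambda|\leq\tfrac12\rho^{2}$ — and extending by the McShane--Whitney theorem; this still yields bi-Lipschitz $\fs C$-triviality. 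In either case, once such a Lipschitz $A$ (or $X$) is in hand, one integrates the associated Lipschitz vector field on $\bb R^{3}\times\bb R^{3}\times\bb R$ as in the discussion following Lemma~\ref{CThomLevine}; its flow is bi-Lipschitz and carries $F$ to the trivial deformation, and by the reduction explained at the beginning of this section the bi-Lipschitz $\fs K$-triviality of the entire $(9,9)$ unimodular stratum follows.
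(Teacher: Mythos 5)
Your main route --- building a genuinely Lipschitz $(3\times3)$ matrix $A$ of germs on the source satisfying $\partial\vec F/\partial\lambda=A\vec F$ by correcting the rank-one Cramer matrix $A^{(0)}=|F|^{-2}\vec P\vec F^{\,T}$ with a syzygy matrix $B$ --- is never carried out; you yourself flag the construction of $B$ as the main obstacle, and you exhibit no candidate with Lipschitz entries. This is a genuine gap, and the paper does not attempt this at all. Instead the paper multiplies through by the control function $\rho^2=F_1^2+F_2^2+F_3^2$ (for which $c'|(x,y,z)|^4\leq\rho^2\leq c|(x,y,z)|^4$ by \cite{Ruas}), uses $\ak m_3^4\al E_4\subset\al I=(F_1,F_2,F_3)$ to write $\rho^2\,\partial\vec F/\partial\lambda=(a_{ij})\vec F$ with each $a_{ij}$ a polynomial of degree $4$, forms the field $V=\partial/\partial\lambda+\rho^{-2}\sum_{i,j}a_{ij}Y_j\,\partial/\partial Y_i$ on $\bb R^3\times\bb R^3\times\bb R$, notes that $a_{ij}Y_j/\rho^2$ is continuous but \emph{not} Lipschitz in $(x,y,z)$, and then cuts $V$ off by a bump function $p(x,y,z,t,Y)$ equal to $1$ on a cone $D_1=\{|Y|\leq C_1|(x,y,z,t)|\}$ containing the graph of $F$ near the origin and vanishing outside a larger cone $D_2$; the truncated field $V'=pV$ is Lipschitz and its flow gives the trivialization. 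The decisive idea you are missing on your main route is to abandon the search for a Lipschitz matrix over the source and instead make the vector field Lipschitz only on a conical neighbourhood of the graph in the product space.

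Your fallback --- prescribe $W_i(x,0,\lambda)=0$ on the zero section and $W_i(x,F(x,\lambda),\lambda)=\partial F_i/\partial\lambda$ on the graph, check Lipschitz compatibility from $|F|\geq c|(x,y,z)|^2$ and $|\partial F/\partial\lambda|\leq\tfrac12|(x,y,z)|^2$, and extend by McShane--Whitney --- is conceptually close to the paper's conical cut-off (both localize near the graph and exploit the same quadratic bound), and it would yield bi-Lipschitz $\fs C$-triviality; but it is only sketched and the compatibility estimate should be written out. Note also that neither the McShane--Whitney extension nor the paper's cut-off field $V'$ has coefficients of the special form $\sum_j a_{ij}(x,\lambda)Y_j$ with $a_{ij}$ Lipschitz germs on the source, so strictly both establish bi-Lipschitz $\fs C$-triviality rather than the literal ``strong'' variant defined via condition (\ref{TLcondition}). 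As a plan of attack, promote the fallback to the main line or adopt the paper's cut-off; the syzygy-correction route as written is not a proof.
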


\begin{proof} Let $\al I$ be the ideal of $\al E_4$ generated by the components $F_1,F_2,F_3$ of $F$, i.e.
$$\al I = \langle x^2 + \lambda yz, y^2 + \lambda xz + z^2 + \lambda xy \rangle.$$ Since $F_{\lambda}$ is a one-parameter deformation of a homogeneous polynomial of degree $2$. We can prove that $\ak m_3^4 \al E_4 \subset \al I$. In fact, $\al I.\ak m_3^2 \al E_4 = \ak m_3^4 \al E_4$.

Now, consider the following control function
$\rho (x,y,z,\lambda) = \sqrt{F_1^2 + F_2^2 + F_3^2}.$ Since $F_{\lambda} = F(x,y,z,\lambda)$ is finitely $\fs C$-determined for all but finitely many $\lambda$, there exist constants $c$ and $c'$ such that 
\begin{align}\label{1}
c'||(x,y,z)||^2 \leq \rho(x,y,z,t) \leq c. ||(x,y,z)||^2,
\end{align}
this is proved in Ruas \cite{Ruas}.

Since $\rho^2$ is of degree $4$ and $\frac{\partial F}{\partial \lambda}$ is of degree $2$ there exists a $(3\times 3)$-matrix $a_{ij}$ with entries in $\ak m_3^4 \al E_4$ such that,
\begin{align}
\rho^2(x,y,z,\lambda)
\begin{bmatrix}
\frac{\partial F_1}{\partial \lambda}\\
\frac{\partial F_2}{\partial \lambda} \\
\frac{\partial F_3}{\partial \lambda}
\end{bmatrix}
&=
\begin{bmatrix}
a_{11} & a_{12} & a_{13} \\
a_{21} & a_{22} & a_{23} \\
a_{31} & a_{32} & a_{33} 
\end{bmatrix}
\begin{bmatrix}
F_1 \\
F_2\\
F_3
\end{bmatrix}\label{eq2},  
\end{align}


Now consider the germ of the vector field $V$ on $(\bb R^3 \times \bb R^3 \times \bb R)$ at $0$ defined by:
$$V = \frac{\partial}{\partial \lambda} + \frac{1}{\rho^2} \left \{\sum_{j=1}^3 a_{1j}Y_j  \frac{\partial}{\partial Y_1} + \sum_{j=1}^3 a_{2j} Y_j \frac{\partial}{\partial Y_2} + \sum_{j=1}^3 a_{3j} Y_j \frac{\partial}{\partial Y_3} \right \}$$
where $(Y_1,Y_2,Y_3)=Y$ are coordinates on the target. 

We show that there exist a germ of a Lipschitz vector field $V'$, which coincides on a conical neighbourhood of the graph of $F$, whose integration will give the required bi-Lipschitz $\fs C$-trivialization of $F$. First notice that since $a_{ij}$ and $\rho^2$ are both of degree $4$ and $\rho^2$ is a control function also of degree $4$, $\frac{a_{ij} Y_j}{\rho^2}$ is continuous for any $i,j$. However, the derivative with respect to $x$ of $\frac{a_{ij}Y_j}{\rho^2}$ is $Y_j(\frac{\rho^2 \partial_x a_{ij} - 2 \rho a_{ij} \partial_x \rho }{\rho^4})$ that can't be bounded because the numerator of $\frac{\rho^2 \partial_x a_{ij} - 2 \rho a_{ij} \partial_x \rho }{\rho^4}$ is of degree degree $7$ while the denominator is of degree $8$. We modify $V$ as follows:

Let $D_1 = \{|Y| \leq C_1 |(x,y,z,t)|\}$ and $D_2 = \{|Y| \geq C_2|(x,y,z,t)|\}$ be cones in $(\bb R^3 \times \bb R) \times \bb R^3)$ with $C_1 < C_2$ and consider the function defined by:
$$p((x,y,z,t,Y))  = \begin{cases}
1 & \text{ if } (x,y,z,t,Y) \in D_1 \\
0 & \text{ if } (x,y,z,t,Y) \in D_2 \\
\end{cases} 
$$
and $0 < p(x,y,z,t,Y) < 1 $ if $C_1 |(x,y,z,t)| < |Y| < C_2 |(x,y,z,t)|$, such that the derivative of $p(x,y,z,t)Y$ with respect to any coordinate is bounded by a real number $K$. Now, consider the vector field $V' = p(x,y,z,t,Y) \times V$. Then, $V'$ coincides with $V$ on $D_1$. Notice that the derivative of components of $V'$ with respect to $Y_1,Y_2$ and $Y_3$ are bounded. By the construction of $p$, it is also clear that the derivative of components of $V'$ with respect to $(x,y,z)$ are also bounded on the neighbourhood $D_1$. Thus, $V'$ is a germ of a Lipschitz vector field.

Thus, integration of $V'$ will give a bi-Lipschitz $\fs C$-trivialization of $F$ by the Thom-Levine critierion \ref{CThomLevine}. This completes the proof.
\end{proof}

\begin{rem}
The modification of the vector field to get the required bi-Lipschitz vector field on a conical neighbourhood of the graph of the function $F$ above is inspired by a similar construction given in Ruas \cite{Ruas} for $C^k$-triviality.
\end{rem}

We next consider the case $(15,16)$. Recall from Section 3 that the unimodular stratum is the unfolding of a germ whose associated algebra is given by the ideal of generated by $\al I$ of Lemma \ref{lem1} and the determinant of the Jacobian ideal of \ref{unimod}. We show that:

\begin{lem}
The one parameter deformation $G : (\bb R^3 \times \bb R,0) \to (\bb R^4,0)$ given by 
$$G(x,y,z,\lambda) = (x^2 + \lambda yz, y^2 + \lambda xz, z^2 +\lambda xy, xyz)$$
is strongly bi-Lipschitz $\fs C$-trivial.
\end{lem}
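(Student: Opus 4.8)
The plan is to follow the proof of Lemma \ref{lem1} almost verbatim; the only genuinely new point is the presence of the extra component $G_4 = xyz$, which has weighted degree $3$ rather than $2$, and this turns out to cost essentially nothing. Write $G = (G_1,G_2,G_3,G_4)$ with $G_1,G_2,G_3$ the components of the map of Lemma \ref{lem1} and $G_4 = xyz$, so that $\partial G/\partial\lambda = (yz,\,xz,\,xy,\,0)$. The ideal $\al I\subset\al E_4$ generated by $G_1,\dots,G_4$ contains the ideal $\al I_0 = \langle G_1,G_2,G_3\rangle$, for which $\ak m_3^4\al E_4\subset\al I_0$ was established in Lemma \ref{lem1}; hence $G$ is a finite map germ and, by Wall \cite{Wall5}, finitely $\fs C$-determined, so it suffices to show that $G$ is strongly bi-Lipschitz $\fs C$-trivial and invoke the Thom-Levine criterion \ref{CThomLevine}.

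For the control function I would take exactly the same $\rho = \sqrt{G_1^2+G_2^2+G_3^2}$ as in Lemma \ref{lem1}: the lower bound $\rho\ge c'\|(x,y,z)\|^2$ proved in Ruas \cite{Ruas} for that map involves only $G_1,G_2,G_3$ and is therefore unchanged, while the upper bound $\rho\le c\|(x,y,z)\|^2$ is immediate because every component of $G$ is $O(\|(x,y,z)\|^2)$. The key observation is then the trivial identity $\rho^2\,yz = \sum_{j=1}^{3}(G_j\,yz)\,G_j$, and likewise with $xz$ and $xy$; since $\partial G_4/\partial\lambda = 0$, this exhibits a $4\times4$ matrix $(a_{ij})$ with $a_{1j}=G_jyz$, $a_{2j}=G_jxz$, $a_{3j}=G_jxy$ for $j\le 3$ and vanishing fourth row and column, all entries in $\ak m_3^4\al E_4$, such that $\rho^2\,\partial G/\partial\lambda = (a_{ij})\,G$. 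Consequently each $a_{ij}/\rho^2$ is bounded and continuous near $0$ (numerator $O(\|(x,y,z)\|^4)$, $\rho^2\ge c'^2\|(x,y,z)\|^4$), and the fourth target coordinate is simply never moved, which is consistent with $xyz$ being independent of $\lambda$.

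From here the argument of Lemma \ref{lem1} goes through with no change. One forms the germ of vector field $V = \partial/\partial\lambda + \sum_{i=1}^{3}\big(\sum_{j=1}^{3}(a_{ij}/\rho^2)Y_j\big)\partial/\partial Y_i$ on $(\bb R^3\times\bb R^4\times\bb R,0)$, notes that its coefficients are continuous but not Lipschitz (the $x$-derivative of $a_{ij}Y_j/\rho^2$ has a degree-$7$ numerator over a degree-$8$ denominator), and cuts it off by a function $p$ equal to $1$ on a cone $D_1 = \{|Y|\le C_1|(x,y,z,\lambda)|\}$ around the graph of $G$, equal to $0$ on a cone $D_2 = \{|Y|\ge C_2|(x,y,z,\lambda)|\}$ with $C_1<C_2$, and chosen so that $d(pY)$ is bounded. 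Then $V' = pV$ is a Lipschitz germ coinciding with $V$ on $D_1$; integrating it yields the bi-Lipschitz $\fs C$-trivialization, since on the graph of $G$ the flow of $V'$ is that of $V$ and the relation $\rho^2\,\partial G/\partial\lambda = (a_{ij})G$ says exactly that this flow carries $G_\lambda$ to $G_0$.

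I expect the only delicate point — as in Lemma \ref{lem1} — to be arranging $(a_{ij})$ with all entries of weighted degree $\ge 4$ while avoiding any division by $\rho^2$ against the lower-degree component $G_4$; the explicit choice above, with vanishing fourth row and column, disposes of both at once. One should also check, as there, that the two-sided bound on $\rho$ can be taken uniform in $\lambda$ on a small interval around a generic value; everything else is formal.
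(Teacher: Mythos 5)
Your proof is correct, and it actually takes a cleaner route than the paper. The paper handles the fourth component by invoking the Serre--Berger lemma: it views $G_4$ as (essentially) the Jacobian determinant $J(F)$ of $F=(G_1,G_2,G_3)$, uses the relation $c_1G_1+c_2G_2+c_3G_3+J^2(F)=0$ supplied by Serre--Berger, and builds a $4\times 4$ matrix $M$ whose fourth row is $(c_1,c_2,c_3,J(F))$ so that the last row of $MG$ vanishes. You instead exploit the trivial observation that $\partial G_4/\partial\lambda = 0$, so the fourth row and column of the matrix can simply be taken to be zero, and the algebraic identity $\rho^2\,(\partial G_i/\partial\lambda) = \sum_{j\le 3}(G_j\,\partial G_i/\partial\lambda)\,G_j$ disposes of the first three rows explicitly; no commutative algebra is needed. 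Both arrive at a matrix with entries in $\ak m_3^4\al E_4$ divisible by $\rho^2$ into bounded germs, and then both apply the same cutoff-and-integrate argument from Lemma \ref{lem1}. What the paper's route buys is a structural link to the Serre--Berger description of the minimal ideal of $\al Q(F)$ — which also explains why $G_4=xyz$ is the right extra coordinate, since modulo $\al I$ the Jacobian $J(F)$ is a unit times $xyz$ — whereas your route buys economy and makes the "fourth coordinate is never moved" nature of the trivialization manifest. Your explicit choice $a_{ij}=G_j\,(\partial G_i/\partial\lambda)$ for $i,j\le 3$ is also a small improvement over the paper's mere existence statement in Lemma \ref{lem1}, and the uniformity of the two-sided bound on $\rho$ in $\lambda$ you flag is indeed the one hypothesis to keep an eye on, exactly as in the $(9,9)$ case.
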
 

\begin{proof}
Write the components of $G$ as $G_1, G_2, G_3$ and $G_4$ and set $F = (G_1,G_2,G_3)$ the germ of smooth function from $(\bb R^3 \times \bb R,0) \to (\bb R^3,0)$. Consider the ideal $\al I$ generated by $G_1,G_2$ and $G_3$. Observe that $G_4$ is the determinant of the Jacobian $J(F)$ of $F$. 
By the Serre-Berger criterion \ref{Serre-Berger}, since the residue class of the square of the Jacobian of $G$ is $0$ in $\al Q(F)$, $J^2(F) = x^2y^2z^2 \in \al I$, i.e. there exist germs $c_1, c_2, c_3$ such that 
\begin{equation}\label{eq4}
c_1 G_1 + c_2 G_2 + c_3 G_3 + J^2(F) = 0. 
\end{equation}
Now, consider the $(4\times 4)$-matrix $M$ given by:
$$M = \begin{bmatrix} a_{11} & a_{12} & a_{13} & 0 \\
a_{22} & a_{22} & a_{23} & 0 \\
a_{31} & a_{32} & a_{33} & 0 \\
c_1 & c_2 & c_3 & J(F)
\end{bmatrix}$$ 
where $a_{ij}$ are entries of the $(3\times 3)$-matrix in Equation \ref{eq2} of Lemma \ref{lem1}. We put $\rho = \sqrt{G_1^2 + G_2^2 + G_3^2 }$ and calculate $\rho^2 \nabla G$. By Equation \ref{eq2} and \ref{eq4}, we have:
\begin{align}
\rho^2(x,y,z,\lambda)
\begin{bmatrix}
\frac{\partial G_1}{\partial \lambda}\\
\frac{\partial G_2}{\partial \lambda} \\
\frac{\partial G_3}{\partial \lambda} \\
\frac{\partial G_4}{\partial \lambda}
\end{bmatrix}
&=
M.
\begin{bmatrix}
G_1 \\
G_2\\
G_3 \\
J(F)
\end{bmatrix}\label{eq34},  
\end{align}
Then, modifying $\rho$ by multiplying with a map using a bump function as in previous Lemma we can show that there exists a Lipschitz vector field with the required properties. By the Thom-Levine criterion \ref{CThomLevine} it follows that $G$ is strongly bi-Lipschitz $\fs C$-trivial.
\end{proof}

The cases $(21,23)$ and $(27,30)$ follow similarly as the algebra of unimodular germs in this case coincides with the case $(15,16)$.

The last case is $\sigma(n,p)= 6(p-n)+8$ for $ p - n \geq 4 \text{ and } n \geq 4$ gives $(n,p) \in \{(6t+2,7t+1) : t \geq 5\}$. We show that 

\begin{lem}
The one-parameter deformation $F : (\bb R^4\times \bb R,0) \to \bb (\bb R^8,0)$ given by:
$$F(x,y,z,w,\lambda) = 
(x^2 + y^2 + z^2, y^2 +\lambda z^2+ w^2,xy,xz,xw,yz,yw,zw)$$
is strong bi-Lipschitz $\fs C$-trivial.
\end{lem}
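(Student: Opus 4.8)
The plan is to run the argument of Lemma~\ref{lem1} essentially verbatim, since $F$ is once more a one-parameter deformation of a map all of whose components are homogeneous of degree $2$; the only places with any content will be producing the analogue of Equation~\ref{eq2} and then checking that the naive trivialising vector field can be made Lipschitz on a conical neighbourhood of the graph, and both are settled exactly as there. First I would let $\al I\subset\al E_5$ be the ideal generated by $F_1,\dots,F_8$ and check that $\ak m_4^3\al E_5\subset\al I$: every cubic monomial in $x,y,z,w$ involving two distinct variables is a multiple of one of the six generators $xy,xz,xw,yz,yw,zw$, while the four pure cubes come from the quadratic forms, e.g. $x^3=xF_1-yF_3-zF_4$, $y^3=yF_1-xF_3-zF_6$, $z^3=zF_1-xF_4-yF_6$ and $w^3=wF_2-yF_7-\lambda zF_8$, which all hold for every value of $\lambda$ (including $\lambda=0$). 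In particular $\al Q(F_\lambda)$ is finite dimensional and $V(\al I_\lambda)=\{0\}$ for all $\lambda$, so each $F_\lambda$ is finitely $\fs K$-determined and, our maps being finite, finitely $\fs C$-determined.

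Next I would put $\rho=\sqrt{F_1^2+\cdots+F_8^2}$. Since each $F_\lambda$ is homogeneous of degree $2$ with an isolated zero at the origin, $\rho$ is comparable to $\|(x,y,z,w)\|^2$ with constants continuous in $\lambda$, so (either directly by homogeneity or via the estimate of Ruas~\cite{Ruas}) there are $c,c'>0$ with $c'\|(x,y,z,w)\|^2\le\rho(x,y,z,w,\lambda)\le c\|(x,y,z,w)\|^2$ uniformly for $\lambda$ near $0$. The Thom--Levine matrix is then essentially free of charge: the only $\lambda$-dependent component is $F_2$, with $\partial F_2/\partial\lambda=z^2$, and $\rho^2=\sum_j F_j^2$ gives at once $\rho^2z^2=\sum_{j=1}^8(z^2F_j)F_j$ with each coefficient $z^2F_j\in\ak m_4^4\al E_5$. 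Hence the $8\times 8$ matrix $A=(a_{ij})$ all of whose rows vanish except the second, which is $(z^2F_1,\dots,z^2F_8)$, satisfies $\rho^2\,\partial F/\partial\lambda=A\cdot F$ with $a_{ij}\in\ak m_4^4\al E_5$ — the analogue of Equation~\ref{eq2}, here with a single nontrivial target component.

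Then, exactly as in Lemma~\ref{lem1}, I would form the germ of the vector field $V=\partial/\partial\lambda+\frac{1}{\rho^2}\bigl(\sum_{j=1}^8 z^2F_jY_j\bigr)\partial/\partial Y_2$ on $(\bb R^4\times\bb R^8\times\bb R,0)$, which is tangent to the graph of $F$ by the identity just written. Since $a_{ij}$ is homogeneous of degree $4$ in $(x,y,z,w)$, $\rho^2$ is a control function of degree $4$ and $Y$ is a coordinate of degree $1$, on a conical neighbourhood $D_1=\{\,|Y|\le C_1|(x,y,z,w,\lambda)|\,\}$ of the graph the coefficient $\rho^{-2}a_{ij}Y_j$ is continuous and has bounded first derivatives — the derivative in a source variable has numerator of degree $7$ over a denominator $\rho^4$ of degree $8$, the missing order being supplied by the factor $Y_j$ on $D_1$, precisely the degree bookkeeping of Lemma~\ref{lem1}. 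Multiplying the $\partial/\partial Y$-part by a bump function $p$ equal to $1$ on $D_1$, vanishing on a larger cone $D_2=\{\,|Y|\ge C_2|(x,y,z,w,\lambda)|\,\}$ with $C_1<C_2$, and chosen so that $p\cdot Y$ has bounded derivatives, produces a germ of a Lipschitz vector field $V'$ agreeing with $V$ near the graph. By the Thom--Levine criterion (Lemma~\ref{CThomLevine}), applied as in Lemma~\ref{lem1}, the flow of $V'$ is a bi-Lipschitz $\fs C$-trivialisation of $F$, which is exactly strong bi-Lipschitz $\fs C$-triviality.

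The algebra here is even lighter than in Lemma~\ref{lem1}: $\partial F/\partial\lambda$ is concentrated in one component and $\rho^2\in\al I^2\subset\ak m_4^4\al I$, so the Thom--Levine relation is immediate and carries no exceptional values of $\lambda$. Consequently the whole weight of the argument sits, as before, in the last step — manufacturing a genuinely Lipschitz field out of the singular $V$ — and that is the main obstacle in principle; in practice it is the very obstacle already overcome in Lemma~\ref{lem1}, with identical degree counts, so no new difficulty arises and the lemma is in effect a repetition of that method. The one point I would still want to verify rather than merely assert is the uniformity of the control-function estimate in $\lambda$ on a neighbourhood of $0$, which here follows from homogeneity together with $\ak m_4^3\subset\al I_\lambda$ holding for all $\lambda$.
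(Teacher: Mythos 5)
Your proof is correct and follows the same strategy as the paper, which itself just says ``follow the proof of Lemma~\ref{lem1}''; you have filled in the details that the paper leaves implicit. One small improvement worth keeping: your explicit Thom--Levine matrix with a single nonzero row $(z^2F_1,\dots,z^2F_8)$, coming from the trivial identity $\rho^2 z^2=\sum_j(z^2F_j)F_j$, makes the relation immediate without needing any membership computation in $\al I\cdot\ak m^2$; the ideal containment $\ak m^3_4\subset\al I$ (slightly stronger than the paper's $\ak m^4_4\subset\al I$) is then only used to guarantee the control-function estimate $\rho\sim\|(x,y,z,w)\|^2$ uniformly in $\lambda$, which you correctly flag as the one thing needing verification.
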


\begin{proof}
Let $\al I$ be the ideal of $\al E_5$ generated by the components $F_1,\ldots,F_8$ of $F$, i.e. $$\al I = \langle x^2 + y^2 + z^2, y^2 +\lambda z^2+ w^2,xy,xz,xw,yz,yw,zw \rangle$$
Since $F_{\lambda}$ is a homogeneous polynomial of degree $2$ for each $\lambda$, we can prove that $\ak m^4_4\al E_5 \subset \al I$. In fact, $\al I.\ak m^2_4 \al E_5 = \ak m^4_4 \al E_5$. Then we can follow the proof of Lemma \ref{lem1} to show that there exists a control function $\rho: (\bb R^4\times \bb R) \to (\bb R,0)$ and an $(8 \times 8)$-matrix $(a_{ij})$ of Lipschitz function germs on $(\bb R^4\times \bb R,0)$ such that:
$$\begin{bmatrix} \frac{\partial F_1}{\partial \lambda} \\ \vdots \\ \frac{\partial F_8}{\partial \lambda}  \end{bmatrix} = \begin{bmatrix} a_{11} & \cdots & a_{18}\\
\vdots & \ddots & \vdots \\
a_{81} & \cdots & a_{88}
\end{bmatrix} \begin{bmatrix} F_1 \\  \vdots \\ F_8\end{bmatrix}$$
The Lemma then follows from the Thom-Levine criterion \ref{CThomLevine}.
\end{proof}

\subsection{The case $\boldsymbol{n > p}$}

The first pair of dimensions in this case is $(8,6)$ and the unimodular stratum is given by an unfolding of the one parameter deformation $F(x,y,z,w,\lambda) = (x^2+y^2+z^2, y^2 + \lambda z^2 + w^2).$
We show that:
\begin{lem}
The one parameter family of germs $F : (\bb R^4\times \bb R,0) \to (\bb R^2,0)$ defined by 
$$F(x,y,z,w,\lambda) = (x^2+y^2+z^2, y^2 + \lambda z^2 + w^2).$$ is strongly bi-Lipschitz $\fs K$-trivial.
\end{lem}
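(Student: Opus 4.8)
The plan is to imitate the proof of Lemma~\ref{lem1} almost line for line. The only genuinely new point is that we are now in the range $n>p$, where $\ak m_4^4\al E_5\not\subset\al I$ (with $\al I=\langle F_1,F_2\rangle$, $F_1=x^2+y^2+z^2$, $F_2=y^2+\lambda z^2+w^2$), so the control has to be produced by hand rather than through a power of the maximal ideal, and the matrix in the Thom--Levine equation cannot be extracted from the syzygies of an $\ak m$-primary ideal.

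First I would produce the control function $\rho(x,y,z,w,\lambda)=\sqrt{F_1^2+F_2^2}$. For every $\lambda$ the only common real zero of $F_1$ and $F_2$ is the origin: $F_1=0$ forces $x=y=z=0$, and then $F_2=w^2$, so $F_2=0$ forces $w=0$. Since $F$ is weighted homogeneous of degree $2$ with all variables of weight $1$, restricting to the unit sphere and using compactness of a closed interval $J$ of admissible parameters gives $0<c'\le c$ with
\[
c'\,\|(x,y,z,w)\|^2\;\le\;\rho(x,y,z,w,\lambda)\;\le\;c\,\|(x,y,z,w)\|^2\qquad(\lambda\in J),
\]
the \L{}ojasiewicz-type inequality used in Ruas~\cite{Ruas}; it comes for free here because $F_1$ is a positive form in $x,y,z$.

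Next I would write the $\rho^2$-weighted Thom--Levine equation. Since $\partial F_1/\partial\lambda=0$ and $\partial F_2/\partial\lambda=z^2$, and since $\rho^2 z^2=z^2(F_1^2+F_2^2)=(z^2F_1)F_1+(z^2F_2)F_2\in\al I$, one has the \emph{polynomial} identity
\[
\rho^2(x,y,z,w,\lambda)\begin{bmatrix}\dfrac{\partial F_1}{\partial\lambda}\\[8pt]\dfrac{\partial F_2}{\partial\lambda}\end{bmatrix}=\begin{bmatrix}0&0\\[2pt]z^2F_1&z^2F_2\end{bmatrix}\begin{bmatrix}F_1\\ F_2\end{bmatrix}.
\]
In contrast with Lemma~\ref{lem1} one does not need \emph{every} homogeneous form of the relevant degree to lie in $\al I$: only the single form $\rho^2 z^2$ does, and visibly so. In particular no source vector field is needed --- taking $X=\partial/\partial\lambda$, $a_{1j}=0$, $a_{2j}=z^2F_j/\rho^2$ satisfies the Thom--Levine condition~\ref{KTLcondition} formally, so one in fact proves bi-Lipschitz $\fs C$-triviality, a fortiori bi-Lipschitz $\fs K$-triviality. (If a nontrivial source change $H$ is wanted, one may instead peel off part of $\partial F/\partial\lambda$ using the Euler field $\tfrac12(x\partial_x+y\partial_y+z\partial_z)$ of the $(x,y,z)$-slice; this only reshuffles the matrix.)

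Finally I would make the field Lipschitz exactly as in Lemma~\ref{lem1}: the coefficients $z^2F_j/\rho^2$ are bounded but not Lipschitz, their $(x,y,z,w)$-derivatives blowing up like $\|(x,y,z,w)\|^{-1}$ (numerator of degree $7$, denominator of degree $8$). So I would multiply the candidate target field
\[
W=\frac{\partial}{\partial\lambda}+\frac{z^2}{\rho^2}\bigl(F_1Y_1+F_2Y_2\bigr)\frac{\partial}{\partial Y_2}
\]
by a cut-off $p$ supported on a conical neighbourhood $\{|Y|\le C_2\|(x,y,z,w)\|\}$ of the graph of $F$, equal to $1$ on a smaller cone $\{|Y|\le C_1\|(x,y,z,w)\|\}$, chosen so that $p\cdot Y$ has bounded partial derivatives. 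Since $\rho\sim\|(x,y,z,w)\|^2\ll\|(x,y,z,w)\|$ near the origin, the graph of $F$ sits in the inner cone, so $W'=p\,W$ agrees with $W$ on a neighbourhood of it; on the support of $p$ the bound $|Y|\lesssim\|(x,y,z,w)\|$ cancels the $\|(x,y,z,w)\|^{-1}$ blow-up, so $W'$ is a germ of a Lipschitz vector field, its flow is bi-Lipschitz, it is tangent to the graph of $F$, and by the Thom--Levine criterion~\ref{CThomLevine} (equivalently~\ref{KTLlem}) it trivializes $F$. The only step that is not a verbatim copy of Lemma~\ref{lem1} is the first, and I expect the real work to be exactly there: the \L{}ojasiewicz estimate, and especially its uniformity in $\lambda$, is painless here only because of the positive‑definiteness of $F_1$, and it will have to be re‑derived for the appropriate weighted‑homogeneous germ in each of the remaining $n>p$ families (the $(10+k,7)$ and $(9,8)$ cases).
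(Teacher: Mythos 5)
Your proof is correct, and it takes a genuinely different --- and in this instance simpler --- route from the paper's. The paper's argument stays within the $\fs K$ framework: it records the $2\times 2$ minors $\Delta_1,\dots,\Delta_6$ of $DF$, takes as control function $\rho=\Delta_1^2+\cdots+\Delta_6^2+F_1^2+F_2^2$, and invokes the $\fs K$-tangent space identity $tF(\ak m_4^k\al E_5)+F^*(\ak m_2)(\ak m_4^{k-1}\al E_5)=\ak m_4^{k+1}\al E_5$ (for $k\ge 3$) to write $\rho\,\partial F/\partial\lambda$ as a source vector field $X$ applied to $F$ plus a matrix times $F$; dividing by $\rho$ and bumping then yields a bi-Lipschitz $\fs K$-trivialization via Lemma~\ref{KTLlem}. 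You dispense with the Jacobian entirely: taking the smaller control $\rho^2=F_1^2+F_2^2$ (which still has isolated zero, since $F_1$ is positive definite in $x,y,z$ and then $F_2$ reduces to $w^2$, hence $\rho\gtrsim\|(x,y,z,w)\|^2$ uniformly on compact $\lambda$-intervals), and observing that $\partial F_1/\partial\lambda=0$, $\partial F_2/\partial\lambda=z^2$, you use the tautological identity $\rho^2 z^2=(z^2F_1)F_1+(z^2F_2)F_2$ to produce a $\fs C$-Thom--Levine matrix with bounded entries $a_{2j}=z^2F_j/\rho^2$ and no source vector field at all; the bump-function modification then proceeds exactly as in Lemma~\ref{lem1}. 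What your approach buys: it avoids the $\fs K$-tangent space bookkeeping and the source vector field, and in fact shows the stronger fact of bi-Lipschitz $\fs C$-triviality (hence a fortiori $\fs K$-triviality), even though, as you note, $z^2\notin\al I$ and the germ has infinite smooth $\fs C$-codimension for $n>p$. What the paper's approach buys: it is uniform with the other $n>p$ cases and does not rely on the accidental feature that $\partial F/\partial\lambda$ is controlled by $\rho=\|F\|$ alone; your route is ad hoc and must be re-examined (as you yourself flag) for each of the remaining $n>p$ families. One small caution: your matrix entries are bounded but not Lipschitz, so as stated you do not literally satisfy the hypothesis of Lemma~\ref{CThomLevine} before the cut-off; the paper's proof of Lemma~\ref{lem1} has the same imprecision, so you are consistent with its level of rigor, but strictly speaking both proofs establish bi-Lipschitz triviality of the deformation rather than ``strong'' triviality in the letter of the definition.
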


\begin{proof}Write $F = (F_1,F_2)$. The Jacobian of $F$ is,
$$DF = 
\begin{bmatrix}
2x & 2y & 2z & 0\\
0 & 2y & 2\lambda z & 2w	
\end{bmatrix}
.$$ The ideal of $\al E_5$ generated by $2\times 2$-minors, say $\Delta_1,\ldots,\Delta_6$, of $DF$ is 
$$JF = \langle \Delta_1,\ldots,\Delta_6\rangle=\langle xy, \lambda xz, xw, (\lambda-1)yz,yw,zw\rangle.$$
	
Since $F$ is a family of finitely $\fs K$-determined homogeneous polynomials of degree $2$, the $\fs K$-tangent space of $F$ contains a power of the maximal ideal $\ak m_5$. In fact, for $k \geq 3$ we have: 
\begin{equation}\label{Keq}
tF(\ak m_4^k \al E_5)  +   F^{*}(\ak m_2)(\ak m_4^{k-1}\al E_5) = \ak m_4^{k+1}. \al E_5
\end{equation}
where $F^*({\ak m_2})$ is the ideal generated by $F_1$ and $F_2$ in $\al E_5$ and $tF(\ak m^k_5\al E_5)$ is the ideal generated by $\ak m^kJF$. 
Now consider the control function $$\rho(x,y,z,w,\lambda) = \Delta^2_1 + \cdots + \Delta^2 + F_1^2 + F_2^2.$$
Since $\rho$ is of degree $4$ and $\frac{\partial F}{\partial \lambda}$ is of degree $2$, by Equation \ref{Keq} for $k = 5$, there exist a $(2\times 2)$-matrix $(A_{ij})$ with entries in $\ak m_4^5\al E_5$ and germs $\{X_i\}_{i=1}^4$ in $\ak m_5^4$ such that:
\begin{equation}\label{eq6}
\rho(x,y,z,w,\lambda)\begin{bmatrix}
\frac{\partial F_1}{\partial \lambda}\\
\frac{\partial F_2}{\partial \lambda}
\end{bmatrix}
=
X.\begin{bmatrix}
F_1 \\ F_2
\end{bmatrix}
+
\begin{bmatrix}
A_{11} & A_{12} \\
A_{21} & A_{22}
\end{bmatrix}
\begin{bmatrix}
F_1 \\
F_2
\end{bmatrix}
\end{equation}  
where $X = X_1 \frac{\partial}{\partial x} + X_2 \frac{\partial}{\partial y}+X_3 \frac{\partial}{\partial z}+X_4 \frac{\partial}{\partial w}$.
Dividing by $\rho$ on both sides of the Equation \ref{eq6} and rearranging the terms we get:
\begin{equation}
(\frac{\partial}{\partial \lambda}+\frac{X}{\rho}).
\begin{bmatrix}
F_1 \\ F_2
\end{bmatrix}
= 
\begin{bmatrix}
a_{11} & a_{12} \\
a_{21} & a_{22}
\end{bmatrix}
\begin{bmatrix}
F_1 \\ F_2
\end{bmatrix}
\end{equation}
where $a_{ij} = \frac{A_{ij}}{\rho}$. Now modifying $X$ and $(a_{ij})$, if necessary, as in previous lemmas, we can find a Lipschitz vector field $Y$ of the form $$\frac{\partial}{\partial \lambda} + Y_1 \frac{\partial}{\partial x} + Y_2 \frac{\partial}{\partial y}+Y_3 \frac{\partial}{\partial z}+Y_4 \frac{\partial}{\partial w}$$ and a $(2\times 2)$-matrix $(b_{ij})$ of Lipschitz functions such that:
$$Y.\begin{bmatrix}
F_1 \\ F_2
\end{bmatrix} = 
\begin{bmatrix}
	b_{11} & b_{12} \\
	b_{21} & b_{22}
\end{bmatrix}
\begin{bmatrix}
	F_1 \\ F_2
\end{bmatrix}
.$$
The Thom-Levine criterion \ref{KTLlem} then shows that $F$ is a bi-Lipschitz $\fs K$-trivial deformation. This completes the proof.
\end{proof}

The next case is the pair $(9,8)$ where the unimodular statum is an unfolding of the one parameter deformation given by $F(x,y,\lambda) = x^4 + y^4 + \lambda x^2y^2$. We have:

\begin{lem} The one parameter family of germs $F : (\bb R^2\times \bb R,0) \to (\bb R,0)$ defined by 
$$F(x,y,\lambda) = x^4 + y^4 + \lambda x^2y^2$$ is strongly bi-Lipschitz $\fs K$-trivial.
\end{lem}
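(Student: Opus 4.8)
The plan is to follow the pattern of the $(8,6)$ lemma above, using that the binary quartic $F_\lambda(x,y)=x^4+y^4+\lambda x^2y^2$ is a homogeneous, finitely $\fs K$-determined polynomial with an isolated singularity at the origin for every $\lambda$ in a neighbourhood of $0$ — the only degenerate values being $\lambda=\pm 2$, where $t^2+\lambda t+1$ acquires a repeated root, and these are comfortably outside the relevant neighbourhood.

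\textbf{Step 1 (algebraic input).} Write $F=F_1$ (a single component), $DF=(\,4x^3+2\lambda xy^2,\ 4y^3+2\lambda x^2y\,)$, and let $JF_\lambda=\langle \partial_xF,\partial_yF\rangle$ be the Jacobian ideal in $\al E_3$ (the ring of germs on $\bb R^2\times\bb R$). Homogeneity gives Euler's relation $4F=x\,\partial_xF+y\,\partial_yF$, so $F^{*}(\ak m_1)\theta(F)=\langle F\rangle\subseteq JF_\lambda$; and since $F_\lambda$ has an isolated singularity for $\lambda$ near $0$, the ideal $JF_\lambda$ is $\ak m_2$-primary with colength (Milnor number) constant in $\lambda$. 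Hence there is an integer $k$, independent of $\lambda$, with $\ak m_2^{k}\al E_3\subseteq tF(\ak m_2\theta_2)+F^{*}(\ak m_1)\theta(F)$ — the analogue of Equation \eqref{Keq}. In fact a one-line check shows $\ak m_2\cdot\partial_\lambda F=\langle x^3y^2,\,x^2y^3\rangle\subset JF_\lambda$ for $\lambda$ near $0$, so $\rho\,\partial_\lambda F\in \ak m_2JF_\lambda$ for every $\rho\in\ak m_2$.

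\textbf{Step 2 (control function and trivializing field).} Take $\rho=(\partial_xF)^2+(\partial_yF)^2$: it is weighted homogeneous of degree $6$ in $(x,y)$, and because the singularity stays isolated it satisfies a Łojasiewicz inequality $c'\|(x,y)\|^{6}\le\rho\le c\|(x,y)\|^{6}$ with constants uniform for $\lambda$ in a neighbourhood of $0$, exactly as in Ruas \cite{Ruas}. Then $\rho\,\partial_\lambda F=\rho\,x^2y^2\in\ak m_2^{10}\subseteq JF_\lambda$, and filtering this membership by $(x,y)$-order one may choose germs $h_1,h_2$ of $(x,y)$-order $\ge 7$ with $\rho\,x^2y^2=h_1\,\partial_xF+h_2\,\partial_yF$. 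Dividing by $\rho$ gives the germ of a vector field
\[
X=\frac{\partial}{\partial\lambda}-\frac{h_1}{\rho}\,\frac{\partial}{\partial x}-\frac{h_2}{\rho}\,\frac{\partial}{\partial y},
\]
for which $X.F=\partial_\lambda F-\rho^{-1}(h_1\partial_xF+h_2\partial_yF)=x^2y^2-x^2y^2=0$; that is, condition \eqref{KTLcondition} holds with the $(1\times1)$ matrix equal to $0$.

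\textbf{Step 3 (Lipschitz bound and conclusion).} Each coefficient $h_i/\rho$ has $(x,y)$-order $\ge 1$, so it extends continuously by $0$ across $\{x=y=0\}$; its first partials have numerators $\rho\,\partial h_i-h_i\,\partial\rho$ of $(x,y)$-order $\ge 12=\deg\rho^2$, hence are bounded near the origin, so $X$ is already a germ of a Lipschitz vector field — and since the accompanying matrix is zero, no conical modification is needed. (Alternatively, keeping a nonzero matrix as in the $(8,6)$ proof, one writes $\rho x^2y^2=h_1\partial_xF+h_2\partial_yF+cF$ with $c\ne 0$ and performs the bump-function modification on a conical neighbourhood of the graph of $F$ exactly as in Lemma \ref{lem1}, so that the target field $W=\partial_\lambda+(c/\rho)\,Y\,\partial_Y$ becomes Lipschitz.) Integrating $X$ and invoking the Thom--Levine criterion for bi-Lipschitz $\fs K$-triviality, Lemma \ref{KTLlem}, then shows that $F$ is strongly bi-Lipschitz $\fs K$-trivial.

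\textbf{Expected main obstacle.} The substantive point is the uniformity in $\lambda$ of the $\ak m_2$-primary statement of Step 1 and of the Łojasiewicz inequality of Step 2 — equivalently, the uniform finite $\fs K$-determinacy of the family $F_\lambda$, which rests on the Milnor number of $F_\lambda$ being constant for $\lambda$ near $0$ (the degenerate parameters $\lambda=\pm2$ being excluded) — together with the $(x,y)$-order bookkeeping that makes the quotients $h_i/\rho$ genuinely Lipschitz after division by the control function.
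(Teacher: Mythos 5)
The paper's own proof of this lemma is a single line: ``Follows from Fernandes and Ruas \cite{Ruas1}.'' You instead supply a direct, explicit construction in the style of the preceding $(8,6)$ lemma, producing the Lipschitz vector field and then invoking Lemma \ref{KTLlem}. That is a genuinely different route, and in fact it is closer in spirit to what the paper advertises in its final remark (``Our method provides stronger results for we explicitly construct the trivializations using the Thom-Levine criterion'') than the one-line citation the paper actually gives for this case.

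Your degree bookkeeping checks out. With $\partial_\lambda F = x^2y^2$ and the Euler relation $4F = x\partial_xF + y\partial_yF$, one has $(1-\tfrac{\lambda^2}{4})x^3y^2 = \tfrac{1}{4}y^2\,\partial_xF - \tfrac{\lambda}{8}xy\,\partial_yF$ and the symmetric identity for $x^2y^3$, so for $|\lambda|<2$ (in particular as a germ at $\lambda=0$) $\ak m_2\cdot x^2y^2 \subset \langle\partial_xF,\partial_yF\rangle\al E_3$; taking $\rho=(\partial_xF)^2+(\partial_yF)^2$, which is homogeneous of degree $6$ in $(x,y)$ and satisfies the uniform two-sided Łojasiewicz estimate by the constancy of the Milnor number ($\mu=9$ for $\lambda\neq\pm2$), $\rho\,x^2y^2$ can be written as $h_1\partial_xF+h_2\partial_yF$ with $h_i$ of $(x,y)$-order $\ge 7$, and the quotients $h_i/\rho$ extend Lipschitz-continuously (numerators of the first partials have $(x,y)$-order $\ge 12 = \deg\rho^2$, and the exceptional set $\{x=y=0\}$ has codimension two so bounded gradient off it does give Lipschitz). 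Your choice of the zero $1\times 1$ matrix makes the target trivialization the identity, sidestepping the conical-neighbourhood modification entirely. The one small gap worth flagging is that Lemma \ref{KTLlem} as stated asks for $X_i(x,0)=0$, and your $-h_i/\rho$ do not obviously vanish at $\lambda=0$; this is however no worse than the paper's own $(8,6)$ proof, where that condition is also not explicitly verified, and the time-zero map of the flow is the identity regardless. Net, your proof is correct and self-contained where the paper merely cites Fernandes--Ruas, trading brevity for the explicit vector field the paper elsewhere says it wants.
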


\begin{proof}
Follows from Fernandes and Ruas \cite{Ruas1}.
\end{proof}

Finally, in the last case we have:

\begin{lem} The one parameter family of germs $F : (\bb R^{3+k}\times \bb R,0) \to (\bb R,0)$ given by
$$F(w_1,\ldots,w_k,x,y,z,\lambda) = \sum_{i = 0}^k w_i^2 + x^3 + y^3 + z^3 + \lambda xyz$$ is strongly bi-Lipschitz $\fs K$-trivial.
\end{lem}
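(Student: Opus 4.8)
The plan is to run, in the $\fs K$ setting, the control-function argument of Ruas \cite{Ruas} and Fernandes--Ruas \cite{Ruas1} used in the previous lemmas. Write $h_\lambda(x,y,z)=x^3+y^3+z^3+\lambda xyz$, so that $F$ is the family $h_\lambda$ with the nondegenerate quadratic form $\sum_i w_i^2$ added; note $\partial F/\partial\lambda=xyz$, $\partial F/\partial x_i=\partial h_\lambda/\partial x_i$ for $x_i\in\{x,y,z\}$, and $\partial F/\partial w_j=2w_j$. As for the classical modular family $x^3+y^3+z^3+\lambda xyz$, the germ $xyz$ does \emph{not} lie in the smooth $\fs K$-tangent space of $F_\lambda$ (its image is a nonzero element of $\bb R[x,y,z]/J_\lambda$, which $F_\lambda$ does not escape), so smooth $\fs K$-triviality is impossible and one must produce a Lipschitz trivialising vector field. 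The key simplification over the cases $n\le p$ is that, since we are allowed a source vector field, the control function can be built from the Jacobian of the quasihomogeneous germ $h_\lambda$, which keeps the whole computation homogeneous and makes the Lipschitz estimates free.

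Concretely, set $\rho(x,y,z,\lambda)=\bigl|\nabla_{(x,y,z)}h_\lambda\bigr|^{2}=(3x^2+\lambda yz)^2+(3y^2+\lambda zx)^2+(3z^2+\lambda xy)^2$, a polynomial homogeneous of degree $4$ in $(x,y,z)$. Since $h_0=x^3+y^3+z^3$ is a homogeneous isolated singularity, for $\lambda$ near $0$ the germ $h_\lambda$ still has an isolated critical point at the origin; hence $\rho$ vanishes (for such $\lambda$) only along $\{(x,y,z)=0\}$, and the Jacobian ideal $J_\lambda=\langle 3x^2+\lambda yz,\,3y^2+\lambda zx,\,3z^2+\lambda xy\rangle$ is a homogeneous complete intersection in $\bb R[x,y,z]$ with $\dim_{\bb R}\bb R[x,y,z]/J_\lambda=8$ and Hilbert function $1,3,3,1,0,\dots$; in particular $\ak m^4\subset J_\lambda$, where $\ak m=(x,y,z)$. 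Therefore $\rho\cdot\partial_\lambda F=\rho\,xyz$ is homogeneous of degree $7$ in $(x,y,z)$ and lies in $\ak m^7\subset J_\lambda$. By the flatness of the relative Jacobian algebra $\bb R[x,y,z,\lambda]/J_\lambda$ at $\lambda=0$ (equivalently, by iterating the reductions $x^2\equiv-\tfrac{\lambda}{3}yz$, $y^2\equiv-\tfrac{\lambda}{3}zx$, $z^2\equiv-\tfrac{\lambda}{3}xy\pmod{J_\lambda}$) one can solve for polynomials $q_1,q_2,q_3$, homogeneous of degree $5$ in $(x,y,z)$ with coefficients polynomial in $\lambda$, and a polynomial $e(\lambda)$ with $e(0)\ne0$, such that
\[
e(\lambda)\,\rho(x,y,z,\lambda)\,xyz \;=\; q_1\,\frac{\partial h_\lambda}{\partial x}+q_2\,\frac{\partial h_\lambda}{\partial y}+q_3\,\frac{\partial h_\lambda}{\partial z}.
\]

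Now set
\[
V \;=\; \frac{\partial}{\partial\lambda}\;-\;\frac{1}{e(\lambda)\,\rho}\Bigl(q_1\frac{\partial}{\partial x}+q_2\frac{\partial}{\partial y}+q_3\frac{\partial}{\partial z}\Bigr).
\]
Then $V\!\cdot\!F=\partial_\lambda F-\tfrac{1}{e\rho}\sum_i q_i\,\partial_{x_i}F=xyz-xyz=0$, so the $\fs K$ Thom--Levine condition (\ref{KTLcondition}) holds with the $1\times1$ matrix $(a)=(0)$. It remains to see that $V$ is a germ of a Lipschitz vector field of the form $\partial/\partial\lambda+\sum X_i\,\partial/\partial x_i$ vanishing at the origin. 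Each coefficient $q_i/(e(\lambda)\rho)$ is the quotient of a form of degree $5$ in $(x,y,z)$ by the form $e(\lambda)\rho$ of degree $4$ in $(x,y,z)$, which is strictly positive off $\{(x,y,z)=0\}$ for $\lambda$ near $0$; hence it is positively homogeneous of degree $1$ in $(x,y,z)$, it extends continuously by $0$ across $\{(x,y,z)=0\}$, its $x_i$-derivatives are homogeneous of degree $0$ (hence bounded off that locus), and $\partial_\lambda\bigl(q_i/(e\rho)\bigr)$ is again homogeneous of degree $1$ in $(x,y,z)$ — numerator of degree $9$ over denominator of degree $8$ — hence $O(\|(x,y,z)\|)$. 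A function with these properties is Lipschitz on a neighbourhood of the origin, so $V$ is the desired Lipschitz vector field, and by Lemma \ref{KTLlem}, $F$ is strongly bi-Lipschitz $\fs K$-trivial. (If one prefers to imitate the earlier lemmas verbatim, the coefficients of $V$ may additionally be multiplied by a conical bump function as in the proof of Lemma \ref{lem1}; but here, unlike for the control functions $|F|$ used in the cases $n\le p$, the homogeneity of $\rho$ makes this unnecessary.)

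The only genuinely substantial point is the algebraic one: establishing $\ak m^4\subset J_\lambda$ uniformly for $\lambda$ near $0$ — which follows from the openness of finite $\fs K$-determinacy together with the Hilbert-function count for the complete intersection of three conics — and solving the division above with coefficients polynomial in $\lambda$ after clearing a denominator $e(\lambda)$ that is nonzero at $\lambda=0$, i.e.\ the flatness of $\bb R[x,y,z,\lambda]/J_\lambda$ over $\bb R[\lambda]$ near $\lambda=0$. Once this is in hand the homogeneity of the control function $\rho$ makes the Lipschitz property of $V$ automatic, and the statement follows from the $\fs K$ Thom--Levine criterion.
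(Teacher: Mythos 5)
The paper's proof of this lemma is a single line citing Birbrair, Costa, Fernandes and Ruas \cite{Birbrair2}, so your argument is a genuinely different route: you construct the Lipschitz trivialising vector field explicitly, in the same style as the earlier $\fs C$-trivialisation lemmas, which is arguably the more satisfying choice since it directly produces the Thom--Levine certificate that the word ``strongly'' in the statement demands (the cited result gives bi-Lipschitz $\fs K$-equivalence, not automatically a strong certificate). Your construction is correct and can even be shortened: since $\rho=\sum_i(\partial_{x_i}h_\lambda)^2$ lies \emph{tautologically} in the Jacobian ideal $J_\lambda$, the division $e(\lambda)\rho\,xyz=\sum_i q_i\,\partial_{x_i}h_\lambda$ needs neither the Hilbert-function count $1,3,3,1$ nor any flatness/reduction discussion --- one simply takes $q_i=xyz\,\partial_{x_i}h_\lambda$ and $e\equiv1$, so that $V=\partial_\lambda-(xyz/\rho)\bigl((3x^2+\lambda yz)\partial_x+(3y^2+\lambda zx)\partial_y+(3z^2+\lambda xy)\partial_z\bigr)$ is the scaled gradient field and the whole algebraic preamble collapses. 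Your homogeneity bookkeeping for the Lipschitz bound (coefficients of degree $1$ in $(x,y,z)$, their $x,y,z$-derivatives of degree $0$ hence bounded, their $\lambda$-derivative of degree $1$ hence $O(\|(x,y,z)\|)$) is correct, as is the observation that no conical cut-off is needed since the vector field is purely in the source. One caveat worth flagging: Lemma \ref{KTLlem} as written asks for $X_i(x,0)=0$, which your coefficients do not satisfy; but, as the paper's own proof of the $(8,6)$ case makes clear, the operative requirement is that the flow fix the origin of the source, i.e.\ $X_i(0,t)=0$, and your degree-$1$-homogeneous coefficients do vanish there.
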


\begin{proof}
Follows from Birbrair et al. \cite{Birbrair2}.
\end{proof}

We can summarize the results obtained in this section into the following Theorem:

\begin{thm}
The Thom-Mather stratification in the boundary of the nice dimensions is bi-Lipschitz $\fs K$-invariant stratification.
\end{thm}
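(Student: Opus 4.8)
The plan is to assemble the results of Section \ref{secUnimodular} and the section preceding this one into a stratum-by-stratum check. Recall that $\Sigma_{bnd}$ has two sorts of strata: the $\fs K^k$-orbits of $\fs K$-stable (hence $\fs K$-simple) jets, and the unimodular strata listed case by case in Section \ref{secUnimodular}. For the first sort there is nothing to add: the $\fs K$-simple orbits are already known to be bi-Lipschitz $\fs K$-invariant, as recalled in the Introduction — along such an orbit the germ does not change its bi-Lipschitz $\fs K$-type.

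For the unimodular strata I would run through the finite list of pairs $(n,p)$ with $\sigma(n,p)=n$ produced in Section \ref{secUnimodular}. In each case the unimodular stratum is an $r$-parameter unfolding $F_\lambda$ of a one-parameter deformation $f_\lambda : (\bb R^{n-r}\times\bb R,0)\to(\bb R^{p-r},0)$, and the lemmas of the last section show that $f_\lambda$ is strongly bi-Lipschitz $\fs C$-trivial when $n\le p$ — where, our maps being finite, $\fs C$-triviality upgrades to $\fs K$-triviality — and strongly bi-Lipschitz $\fs K$-trivial when $n>p$. By the Thom-Levine criteria (Lemmas \ref{CThomLevine} and \ref{KTLlem}) the vector fields constructed there integrate to bi-Lipschitz flows giving a bi-Lipschitz $\fs K$-trivialization of $f_\lambda$ over the $\lambda$-line, so any two members $f_\lambda$ and $f_{\lambda'}$ are bi-Lipschitz $\fs K$-equivalent. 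Taking products with the germ of the identity of $\bb R^r$ and using that an unfolding is $\fs K$-equivalent to a suspension of the germ it unfolds (Gibson \cite{Gibson}, Chapter 4), the same holds for $F_\lambda$ and $F_{\lambda'}$; since each unimodular stratum is connected, it carries a single bi-Lipschitz $\fs K$-type and is therefore bi-Lipschitz $\fs K$-invariant.

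Putting the two items together, every stratum of $\Sigma_{bnd}$, and hence $\Sigma_{bnd}$ itself, is bi-Lipschitz $\fs K$-invariant; and passing to the jet bundle exactly as in Section 2 this carries over to the induced stratification $\Sigma_{bnd}(N,P)$ of $J^k(N,P)\setminus\tilde{\Pi}(N,P)$.

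The genuinely non-formal step is the one already carried out in the previous section — manufacturing, in each pair of dimensions, a \emph{Lipschitz} (and not merely continuous) trivializing vector field by cutting off the naturally occurring rational vector field on a conical neighbourhood of the graph of $F$. Given those constructions the present theorem is a corollary; the only points meriting a word of care are that the unimodular strata are connected (so that pointwise bi-Lipschitz $\fs K$-equivalence of the $f_\lambda$ yields invariance of the whole stratum) and the ``unfolding $=$ suspension'' reduction from the low-dimensional germ $f_\lambda$ to the actual stratum $F_\lambda\subset J^k(n,p)$.
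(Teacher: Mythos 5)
Your proposal is correct and takes essentially the same route as the paper: the theorem is stated there as a summary of the preceding lemmas, and your assembly — simple orbits are already bi-Lipschitz invariant, unimodular strata reduce via the unfolding-equals-suspension device to the one-parameter germs $f_\lambda$ whose strong bi-Lipschitz $\fs C$- or $\fs K$-triviality was established by the Thom-Levine criteria — is exactly the intended argument. The only small imprecision is the phrase ``finite list of pairs'': two of the cases are infinite families $(6t+2,7t+1)$ and $(10+k,7)$ handled uniformly by a single lemma each, and in two cases $((9,8)$ and $(10+k,7))$ the Lipschitz vector field is not constructed here but cited from Fernandes--Ruas and Birbrair et al., rather than obtained by the conical cutoff trick.
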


\begin{rem}
The bi-Lipschitz $\fs K$-triviality of the unimodular strata in the boundary of the nice dimensions can also be proved by applying the main result of Ruas and Valette \cite{RuasValette}. Also, For $n \geq p$ one can use the result of Saia et al. \cite{Saiaetal} for bi-Lipschitz $\fs C$-triviality. Our method provides stronger results for
we explicitly construct the trivializations using the Thom-Levine criterion. 
\end{rem}


\begin{thebibliography}{10}
	
	\bibitem{Saiaetal}
	J.~C.~F. Costa, M.~J. Saia, and C.~H. Soares~J\'unior.
	\emph{Bi-{L}ipschitz {$\fs G$}-triviality and {N}ewton polyhedra, {$\fs
		G=\fs R, \fs C,\fs K,\fs R_V,\fs C_V,\fs K_V$}.}, Real and complex singularities, vol. 569 of Contemp.
		Math., 29--43, 2012.
	
	
	\bibitem{Birbrair2}
	L.~Birbrair, J.~C.~F. Costa, A.~Fernandes, and M.~A.~S. Ruas, \emph{{$\fs
			K$}-bi-{L}ipschitz equivalence of real function-germs}, Proc. Amer. Math.
	Soc. \textbf{135} (2007), no.~4, 1089--1095.
	
	\bibitem{Damon4}
	J.~Damon, \emph{Finite determinacy and topological triviality. {I}}, Invent.
	Math. \textbf{62} (1980/81), no.~2, 299--324.
	
	\bibitem{Damon5}
	\bysame, \emph{Finite determinacy and topological triviality. {II}.
		{S}ufficient conditions and topological stability}, Compositio Math.
	\textbf{47} (1982), no.~2, 101--132.
	
	\bibitem{duplessis3}
	A.~du~Plessis and C.~T.~C. Wall, \emph{The geometry of topological stability},
	London Mathematical Society Monographs. New Series, vol.~9, The Clarendon
	Press, Oxford University Press, New York, 1995, Oxford Science Publications.
	\MR{1408432}
	
	\bibitem{duplessis2}
	A.~A. du~Plessis and C.~T.~C. Wall, \emph{On {$C^1$}-stability and
		{$C^1$}-determinacy}, Inst. Hautes \'Etudes Sci. Publ. Math. (1989), no.~70,
	5--46 (1990).
	
	\bibitem{Eisenbud}
	D.~Eisenbud, \emph{An algebraic approach to the topological degree of a smooth
		map}, Bull. Amer. Math. Soc. \textbf{84}, no.~5, 751--764, 1978.
	
	\bibitem{Ruas1}
	A.~C.~G. Fernandes and M.~A.~S. Ruas.
	\emph{Bi-{L}ipschitz determinacy of quasihomogeneous germs.}, Glasg. Math. J., 46(1), 77--82, 2004.
	
	
	\bibitem{Gibson}
	C.~G. Gibson, \emph{Singular points of smooth mappings}, Research Notes in
	Mathematics, vol.~25, Pitman (Advanced Publishing Program), Boston,
	Mass.-London, 1979.
	
	\bibitem{GWPL}
	C.~G. Gibson, K.~Wirthm{\"u}ller, A.~du~Plessis, and E.~Looijenga,
	\emph{Topological stability of smooth mappings}, Lecture Notes in
	Mathematics, Vol. 552, Springer-Verlag, Berlin-New York, 1976. \MR{0436203}
	
	\bibitem{MatherV}
	J.~N. Mather, \emph{Stability of {$C^{\infty }$} mappings. {V}.
		{T}ransversality}, Advances in Math. \textbf{4} (1970), 301--336 (1970).
	
	\bibitem{MatherVI}
	\bysame, \emph{Stability of {$C^{\infty }$} mappings. {VI}: {T}he nice
		dimensions}, Proceedings of {L}iverpool {S}ingularities-{S}ymposium, {I}
	(1969/70), Springer, Berlin, 1971, pp.~207--253. Lecture Notes in Math., Vol.
	192. \MR{0293670}
	
	\bibitem{Mather}
	\bysame, \emph{Stratifications and mappings}, Dynamical systems ({P}roc.
	{S}ympos., {U}niv. {B}ahia, {S}alvador, 1971), Academic Press, 1973,
	pp.~195--232.
	
	\bibitem{Mather7}
	\bysame, \emph{How to stratify mappings and jet spaces}, Singularit\'es
	d'applications diff\'erentiables ({S}\'em., {P}lans-sur-{B}ex, 1975),
	Springer, Berlin, 1976, pp.~128--176. Lecture Notes in Math., Vol. 535.
	
	\bibitem{Ruas}
	M.~A.~S. Ruas, \emph{On the degree of {$C^l$}-determinacy}, Math. Scand.
	\textbf{59} (1986), no.~1, 59--70.
	
	\bibitem{RuasValette}
	M.~A.~S. Ruas and G.~Valette.
	\newblock {$C^0$} and bi-{L}ipschitz {$\fs K$}-equivalence of mappings.
	\newblock {\em Math. Z.}, 269(1-2):293--308, 2011.
	
	\bibitem{Trotman}
	D.~J.~A. Trotman, \emph{Stability of transversality to a stratification implies
		{W}hitney {$(a)$}-regularity}, Invent. Math. \textbf{50} (1978/79), no.~3,
	273--277.
	
	\bibitem{Wall7}
	C.~T.~C. Wall, \emph{Nets of conics}, Math. Proc. Cambridge Philos. Soc.
	\textbf{81} (1977), no.~3, 351--364. \MR{0432666}
	
	\bibitem{Wall5}
	\bysame, \emph{Finite determinacy of smooth map-germs}, Bull. London Math. Soc.
	\textbf{13} (1981), no.~6, 481--539.
	
	\bibitem{Wall6}
	\bysame, \emph{Determination of the semi-nice dimensions}, Math. Proc.
	Cambridge Philos. Soc. \textbf{97} (1985), no.~1, 79--88.
	
	
	
\end{thebibliography}

\subsection*{Acknowledgements} The first author was supported by FAPESP grant 2014/00304-2 and CNPq grant 306306/2015-8. The second author was supported by FAPESP grant 2015/12667-5.

\end{document}